\newtheorem{theorem}{Theorem}[section]
\newtheorem*{theorem*}{Theorem}
\newtheorem{proposition}[theorem]{Proposition}
\newtheorem{lemma}[theorem]{Lemma}
\newtheorem{corollary}[theorem]{Corollary}
\numberwithin{equation}{section}
\theoremstyle{definition}
\newtheorem{definition}[theorem]{Definition}
\newtheorem{example}[theorem]{Example}
\newcommand{\h}{\mathfrak H}
\newcommand{\Ha}{\mathbb{H}}
\newcommand{\Q}{\mathbb{Q}}
\newcommand{\Z}{\mathbb{Z}}
\newcommand{\R}{\mathbb{R}}
\newcommand{\C}{\mathbb{C}}
\newcommand{\hz}{\h^{2}}
\newcommand{\one}{{\bf 1}}
\newcommand{\qsh}{\ast_\diamond}
\newcommand{\QL}{\Q\langle L \rangle}
\newcommand{\mz}{\mathcal Z}
\definecolor{mycolor}{RGB}{194, 8, 88}
\newcommand{\todo}[1]{\message{LaTeX Warning: You did not finish your work :-( on input line \the\inputlineno} {\color{mycolor} {\big[\,}{\bf Todo:} #1\,\big]}}
\newcommand{\sh}{\shuffle}
\newcommand{\kk}{{\bf k}}
\newcommand{\cop}{\DOTSB\cop@\slimits@}
\newcommand{\cop@}{\mathop{\bigstar}}
\newcommand{\cop@@}[2]{%
  \vphantom{\sum}%
  \ifx#1\displaystyle\big#2\else#2\fi
}
\title{Stuffle regularized multiple Eisenstein series revisited}
\author{Henrik Bachmann}
\address{Graduate School of Mathematics,  Nagoya University, Nagoya, Japan.}
\email{henrik.bachmann@math.nagoya-u.ac.jp}
\subjclass[2020]{Primary 
11M32; 
Secondary 11F11 
}
\keywords{Multiple zeta values, Multiple Eisenstein series, modular forms, regularization}
\begin{document}
\date{\today}
\maketitle

\begin{abstract} 
Multiple Eisenstein series are holomorphic functions in the complex upper-half plane, which can be seen as a crossbreed between multiple zeta values and classical Eisenstein series. They were originally defined by Gangl-Kaneko-Zagier in 2006, and since then, many variants and regularizations of them have been studied. They give a natural bridge between the world of modular forms and multiple zeta values. 
In this note, we give a new algebraic interpretation of stuffle regularized multiple Eisenstein series based on the Hopf algebra structure of the harmonic algebra introduced by Hoffman. 
\end{abstract}

\section{Introduction}

In this note, we will give an overview of the regularization of multiple Eisenstein series and present a new Hopf algebraic approach for the construction of stuffle regularized multiple Eisenstein series. Multiple Eisenstein series were introduced in the depth two case by Gangl-Kaneko-Zagier in \cite{GKZ}, and for higher depth, they were studied in \cite{B0},\cite{B1},\cite{B2} and \cite{BT}. Similar to multiple zeta values, there exist two regularizations, the shuffle and stuffle regularizations, which were introduced in \cite{BT} and \cite{B1}, respectively. We will focus on the stuffle regularisation in this work and give a new algebraic interpretation for them.

As the main building block for all the objects appearing in this note, we define for an index $\kk=(k_1,\dots,k_r) \in \Z_{\geq 1}^r$ of depth $r\geq 1$, $x \in \C \backslash \Z_{<0}$ and $N\geq 1$ the \emph{truncated multiple Hurwitz zeta function} by
\begin{align}\label{eq:defmultiplhurwitz}
       \zeta_N(\kk; x) :=  \zeta_N(k_1,\dots,k_r; x) := \sum_{N > n_1 > \dots > n_r >0} \frac{1}{(x+n_1)^{k_1} \dots (x + n_r)^{k_r}}\,.
\end{align}
The index $\kk$ is also allowed to be empty (i.e. $r=0$), in which case we set $\zeta_N(\emptyset; x) :=1$. We write  $\zeta_N(\kk) =  \zeta_N(\kk; 0)$ for the \emph{truncated multiple zeta values}. In the case $k_1\geq 2$ or $\kk=\emptyset$ the index $\kk$ is called admissible and we can take the limit $N\rightarrow \infty$ to obtain the \emph{multiple zeta values} $\zeta(\kk)  = \lim_{N\rightarrow \infty} \zeta_N(\kk)$, which specialize to the Riemann zeta values $\zeta(k)$ in the depth $r=1$ case. By $\mz$ we denote the $\Q$-vector space spanned by all multiple zeta values. 

The Riemann zeta values also appear as the constant term in the Fourier expansion of the Eisenstein series $\mathbb
{G}(k)$, which can also be constructed out of \eqref{eq:defmultiplhurwitz} in the following way: For $k,N\geq 1$ and  $x \in \C \backslash \Z$ we define
\begin{align}\label{eq:truncatedmonotangent}
        \Psi_N(k;x) := \zeta_N(k; x) +\frac{1}{x} + (-1)^k \zeta_{N}(k;-x) = \sum_{N > n > -N} \frac{1}{(x+n)^k}\,.
\end{align}
 Let $\tau \in \Ha = \{ z \in \C \mid \Im(z) > 0\}$ be an element in the upper-half plane and assume $k\geq 2$. In this case, the limit $\Psi(k;\tau) := \lim_{N \rightarrow \infty} \Psi_N(k;\tau)$ exists and we get by the Lipschitz formula
 \begin{align}\label{eq:defmonotangent}
    \Psi(k;\tau) = \sum_{n \in \Z} \frac{1}{(\tau+n)^k} = \frac{(-2\pi i)^{k}}{(k-1)!} \sum_{d>0} d^{k-1} q^d  \,,
\end{align}
 where $q=e^{2 \pi i \tau}$. In particular, we can also replace $\tau$ by $m\tau$ and take the sum over all $m\geq 1$ to obtain 
 \begin{align*}
     \sum_{m\geq 1} \Psi(k;m\tau) = \frac{(-2\pi i)^k}{(k-1)!}\sum_{d>0} \frac{d^{k-1} q^d}{1-q^d} \,,
\end{align*}
 which is, up to the constant term $\zeta(k)$, exactly the Fourier expansion of the Eisenstein series of weight $k$. This shows that the truncated multiple Hurwitz zeta function \eqref{eq:defmultiplhurwitz} can be used to construct both multiple zeta values as well as the classical Eisenstein series. The purpose of this note is to show that they can also be used to construct multiple Eisenstein series. This will then be used to show that the regularization of the multiple Hurwitz zeta function gives a, in some sense, natural way of defining stuffle regularized multiple Eisenstein series. This construction is based on the Hopf algebra structure of quasi-shuffle algebras, which we will recall in Section \ref{sec:algsetup} after recalling the basic calculation of multiple Eisenstein series in Section \ref{sec:mes}. In Section \ref{sec:stmes}, we then show how (truncated) multiple Eisenstein series can be constructed out of the truncated multiple Hurwitz zeta function \eqref{eq:defmultiplhurwitz} using the convolution product and standard regularization techniques in the harmonic algebra. In the end, we mention some new results on the comparison between the stuffle and shuffle regularized multiple Eisenstein series, which will be discussed in more detail in the master thesis of Turan in \cite{T}. Finally, we mention that the construction we present in Section \ref{sec:stmes} is similar to the one used in \cite{BB} to construct combinatorial multiple Eisenstein series.  \\

{\bf Acknowledgement:} This project was partially supported by JSPS KAKENHI Grants 19K14499 and 21K13771. 


\section{Multiple Eisenstein series}\label{sec:mes}
In this section, we recall basic facts on multiple zeta values, multiple Eisenstein series, and the calculation of their Fourier expansion. Details can be found in \cite{B0}, \cite{B1},\cite{B2},\cite{B3}, and \cite{BT}. For $ k_1,\dots,k_r \geq 2$ and $\tau \in \Ha$ the \emph{multiple Eisenstein series} are defined\footnote{In the case $k_1=2$ we need to use Eisenstein summation as done in Section \ref{sec:stmes}} by
\begin{align}\label{eq:defmes}
\mathbb{G}(k_1,\dots,k_r;\tau) := \sum_{\substack{\lambda_1 \succ \dots \succ \lambda_r \succ 0\\ \lambda_i \in \Z \tau + \Z}} \frac{1}{\lambda_1^{k_1} \dots \lambda_r^{k_r}}   \,,
\end{align}
where the order $\succ$ on the lattice $\Z \tau + \Z$ is defined by $m_1 \tau + n_1 \succ m_2 \tau + n_2$ iff $m_1 > m_2$ or $m_1 = m_2 \wedge n_1 > n_2$. Since $\mathbb{G}(k_1,\dots,k_r;\tau + 1) = \mathbb{G}(k_1,\dots,k_r;\tau)$ the multiple Eisenstein series possess a Fourier expansion, i.e., an expansion in $q=e^{2\pi i \tau}$, which was calculated in \cite{GKZ} for the $r=2$ case and for arbitrary depth by the first author (\cite{B2}). In depth one, we have for $k\geq 2$
\begin{align*} \mathbb{G}(k;\tau) = \sum_{\substack{\lambda \in \Z \tau + \Z\\  \lambda \succ 0}} \frac{1}{\lambda^k }  = \sum_{ \substack{ m > 0 \\ \vee \, (   m=0 \wedge n>0) }} \frac{1}{(m\tau +n)^k }  =  \zeta(k)+  \sum_{m>0} \underbrace{\sum_{n\in \Z}\frac{1}{(m\tau +n)^k}}_{{\large =: \Psi(k;m\tau) } }\,.
\end{align*}
For even $k\geq 4$, these are just the classical Eisenstein series, which are modular forms for the full modular group. Here the $\Psi(k;\tau)$ are exactly the function we saw in \eqref{eq:defmonotangent}, and we refer to them as the \emph{monotangent function} (\cite{Bo}). By the Lipschitz formula \eqref{eq:defmonotangent} we obtain
\begin{align*}
\mathbb{G}(k;\tau)  &= \zeta(k) + \sum_{m>0}  \Psi_k(m\tau) =  \zeta(k) + \frac{(-2\pi i)^{k}}{(k-1)!}\sum_{\substack{m>0\\ d >0}} d^{k-1} q^{m d} =: \zeta(k) + (-2\pi i)^k g(k) \,.
\end{align*} 
Here the $g(k)$ are the generating series of the divisor-sums, and for higher depths, multiple versions of these $q$-series appear, which are defined for $k_1,\dots k_r \geq 1$ by
\begin{align}\label{eq:defmonog}
g(k_1,\dots,k_r;\tau)=g(k_1,\dots,k_r)= \sum_{\substack{m_1 > \dots > m_r > 0\\ n_1, \dots , n_r > 0}} \frac{n_1^{k_1-1}}{(k_1-1)!} \dots \frac{n_r^{k_r-1}}{(k_r-1)!}  q^{m_1 n_1 + \dots + m_r n_r } \,.
\end{align}
These $q$-series were studied in detail in \cite{B2}, \cite{BK} and they can be seen as $q$-analogues of multiple zeta values since one can show that for $k_1 \geq 2$
\begin{align}\label{eq:gareqanalogue}
\lim\limits_{q\rightarrow 1}(1-q)^{k_1+\dots+k_r }g(k_1,\ldots,k_r) = \zeta(k_1,\dots,k_r)\,.
\end{align}

In the Fourier expansion of (multiple) Eisenstein series, the $q$-series $g$ always appear together with a power of $-2\pi i$, and therefore we set for $k_1,\dots,k_r \geq 1$
\begin{align*}
     \hat{g}(k_1,\dots,k_r;\tau) =   \hat{g}(k_1,\dots,k_r) := (-2\pi i)^{k_1+\dots + k_r} g(k_1,\dots,k_r) \,.
\end{align*}
With this, a multiple version of $\mathbb{G}(k;\tau) = \zeta(k) + \hat{g}(k)$ is given by the following. 
\begin{theorem}[$r=1,2$ \cite{GKZ}, $r\geq 1$ \cite{B2}]\label{thm:mesfourier}
 For $k_1,\dots,k_r \geq 2$ there exist explicit integers $\alpha^{k_1,\dots,k_r}_{l_1,\dots,l_r,j} \in \Z$, such that for $q=e^{2\pi i \tau}$ we have
\begin{align*}
    \mathbb{G}(k_1,\dots,k_r;\tau) = \zeta(k_1,\dots,k_r) +\!\!\!\!\!\!\!\!\!\!\sum_{\substack{0 < j < r\\l_1+\dots+l_r = k_1+\dots+k_r\\l_1\geq 2,l_2,\dots,l_r\geq 1}} \!\!\!\!\!\!\!\!\!\! \alpha^{k_1,\dots,k_r}_{l_1,\dots,l_r,j}\, \zeta(l_1,\dots,l_j)  \hat{g}(l_{j+1},\dots,l_r) +  \hat{g}(k_1,\dots,k_r)\,.
\end{align*}
In particular, $\mathbb{G}(k_1,\dots,k_r;\tau) = \zeta(k_1,\dots,k_r)+ \sum_{n> 0} a_{k_1,\dots,k_r}(n) q^n$ for $a_{k_1,\dots,k_r}(n) \in \mz[\pi i]$.
\end{theorem}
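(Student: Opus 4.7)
The strategy, originating in \cite{GKZ} for $r=2$ and extended to general depth in \cite{B2}, is to decompose the lattice sum \eqref{eq:defmes} according to the $\tau$-components. Writing $\lambda_i = m_i\tau + n_i$ with $m_i \geq 0$ (forced by $\lambda_i \succ 0$), the lexicographic ordering makes the sequence $m_1 \geq m_2 \geq \cdots \geq m_r \geq 0$ weakly decreasing, and the lattice points with $m_i = 0$ must sit at the end as a strictly decreasing chain of positive integers. For each $p \in \{0,1,\dots,r\}$ let $\Sigma_p$ be the subsum in which exactly the first $p$ coordinates have $m_i > 0$. Then $\Sigma_0$ equals $\zeta(k_1,\dots,k_r)$ in the limit, $\Sigma_r$ will be shown to equal $\hat{g}(k_1,\dots,k_r)$, and the intermediate $\Sigma_p$ supply the cross terms.

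\textbf{Evaluation of $\Sigma_p$.} For $0 < p \leq r$, further decompose according to maximal runs of equal $m_i$'s among the first $p$ coordinates, obtaining a strict chain $M_1 > M_2 > \cdots > M_s > 0$ with multiplicities summing to $p$. Within each run of equal $m$'s the ordering $\succ$ forces the corresponding $n$'s to form a strictly decreasing chain of integers ranging over all of $\Z$. The key identity is the ``multi-tangent to mono-tangent'' partial-fraction decomposition (cf.\ \cite{Bo}, \cite{B2}): for fixed $m \geq 1$ and indices $k_a,\dots,k_b$,
\[
\sum_{\substack{n_a > \cdots > n_b \\ n_i \in \Z}} \prod_{i=a}^{b} \frac{1}{(m\tau + n_i)^{k_i}} = \sum_{(\ell_a,\dots,\ell_b)} c_{\,k,\ell}\; \Psi(\ell_a;m\tau)\cdots \Psi(\ell_b;m\tau)
\]
with explicit integer coefficients $c_{\,k,\ell}\in\Z$. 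Applying this to each run and summing the resulting product of $\Psi(\ell;M_j \tau)$'s over $M_1 > \cdots > M_s > 0$ via the Lipschitz formula \eqref{eq:defmonotangent} produces an $\hat{g}$-series of depth $s$. The trailing $m_i=0$ part contributes the truncated MZV $\zeta_N(k_{p+1},\dots,k_r)$, which converges to $\zeta(k_{p+1},\dots,k_r)$ as $N\to\infty$.

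\textbf{Assembly and the coefficient claim.} Summing over $p$ and over the internal combinatorial data of each $\Sigma_p$ yields the stated decomposition, with the integer coefficients $\alpha^{k_1,\dots,k_r}_{l_1,\dots,l_r,j}$ recording the partial-fraction combinatorics together with the run-length multiplicities. The admissibility condition $l_1 \geq 2$ in the cross terms mirrors the convergence of the leading zeta factor, just as $k_1 \geq 2$ ensures convergence of $\Sigma_0$. The claim $a_{k_1,\dots,k_r}(n) \in \mz[\pi i]$ is then immediate: each $\hat{g}$-factor has Fourier coefficients in $\Q[\pi i]$, and each $\zeta$-factor lies in $\mz$.

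\textbf{Main obstacle.} The core technical difficulty is producing the explicit $\Z$-linear partial-fraction identity above for a run of equal $m_i$'s, and doing it uniformly enough in the run length and the indices to reassemble cleanly across all runs and all splits $p$; this bookkeeping is the combinatorial heart of \cite{B2}. A secondary subtlety is the boundary case $k_1 = 2$, where the outer $m$-sum is only conditionally convergent and Eisenstein summation (as flagged in the footnote to \eqref{eq:defmes}) is required for a rigorous argument.
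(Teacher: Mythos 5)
Your overall architecture --- decomposing the lattice sum by the $\tau$-components $m_i$, isolating maximal runs of equal $m_i$'s, reducing each run to monotangent functions, and reassembling --- is the same as the paper's. However, your central ``key identity'' is misstated in a way that breaks the argument. You claim that the ordered sum over a run of equal $m$'s, i.e.\ the multitangent function $\Psi(k_a,\dots,k_b;m\tau)$, equals a $\Z$-linear combination of \emph{products} $\Psi(\ell_a;m\tau)\cdots\Psi(\ell_b;m\tau)$ of monotangents with integer coefficients. The correct reduction (Theorem \ref{thm:reductionmonotangent}, due to Bouillot, proved by partial fractions) expresses the multitangent as a linear combination of \emph{single} monotangents $\Psi(\ell;m\tau)$ whose coefficients are \emph{multiple zeta values}; for instance $\Psi(3,2;\tau)=3\zeta(3)\Psi(2;\tau)+\zeta(2)\Psi(3;\tau)$, which is visibly not an integer combination of products of monotangents. (Indeed, products of monotangents at the same level only give access to the stuffle-symmetrized combinations of multitangents, never to an individual one.)

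This error is not cosmetic, for two reasons. First, the MZV coefficients arising from the reduction are precisely the source of the factors $\zeta(l_1,\dots,l_j)$ with \emph{modified} indices $l_1+\dots+l_r=k_1+\dots+k_r$ in the statement; in your version the only $\zeta$-factors available are the trailing $\zeta(k_{p+1},\dots,k_r)$ with the original indices, which already fails to reproduce $\mathbb{G}(3,2)=\zeta(3,2)+3\zeta(3)\hat{g}(2)+2\zeta(2)\hat{g}(3)+\hat{g}(3,2)$ (the term $3\zeta(3)\hat{g}(2)$ and one copy of $\zeta(2)\hat{g}(3)$ come from the reduction of $\Psi(3,2;m\tau)$, not from a trailing zeta). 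Second, it is only because each run collapses to a \emph{single} monotangent that the subsequent sum over $M_1>\dots>M_s>0$ becomes a genuine $\hat{g}$ of the form \eqref{eq:defmonog} via \eqref{eq:ghatclassical}; a product of several monotangents at the same level $M_j$ would not. Finally, you omit the necessary verification that the terms involving $\Psi(1;\tau)$ produced by the reduction vanish (the paper derives this from the antipode relation, Proposition \ref{prop:mzvantipoderelation}); without it the reduction introduces divergent monotangents and the argument does not close.
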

We will sketch the proof of Theorem \ref{thm:mesfourier} in the following and then give an explicit example at the end of the section. First, observe that for $k_1,\dots,k_r \geq 2$ we have by the Lipschitz formula \eqref{eq:defmonotangent}, that the $q$-series $\hat{g}$ can be written as an ordered sum over monotangent functions
\begin{align}\label{eq:ghatclassical}
    \hat{g}(k_1,\dots,k_r) = \sum_{m_1 > \dots > m_r > 0} \Psi(k_1;m_1 \tau) \cdots \Psi(k_r;m_r \tau) \,.
\end{align}
In general, the multiple Eisenstein series can be written as ordered sums over \emph{multitangent functions} (\cite{Bo}), which are for  $k_1,\dots,k_r \geq 2$ and $\tau \in \Ha$ defined  by
\begin{align}\label{eq:defmultitangent}
    \Psi(k_1,\ldots,k_r ;\tau) := \sum_{\substack{n_1>\cdots >n_r \\n_i \in \Z}} \frac{1}{(\tau+n_1)^{k_1}\cdots (\tau+n_r)^{k_r}}.
\end{align}
These functions were originally introduced by Ecalle and then in detail studied by Bouillot in \cite{Bo}. To write $\mathbb{G}(k_1,\dots,k_r;\tau)$ in terms of these functions, one splits up the summation in the definition \eqref{eq:defmes} into $2^r$ parts, corresponding to the different cases where either $m_i = m_{i+1}$ or $m_i > m_{i+1}$ for $\lambda_i = m_i \tau + n_i$ and $i=1,\dots,{r}$ ($\lambda_{r+1}=0$). Then one can check that the multiple Eisenstein series can be written as 
\begin{align}\label{eq:classicalmesasmouldproduct}
    \mathbb{G}(k_1,\dots,k_r;\tau) = \sum_{j=0}^r \hat{g}^*(k_1,\dots,k_j) \zeta(k_{j+1},\dots,k_r)\,,
\end{align}
where the $q$-series $\hat{g}^*$ are given as ordered sums over multitangent functions by
\begin{align}\label{eq:gastclassical}
    \hat{g}^*(k_1,\dots,k_r) := \sum_{\substack{1 \leq j \leq r\\0 = r_0< r_1 < \dots < r_{j-1} < r_j = r\\ m_1 > \dots > m_j > 0}}  \prod_{i=1}^j \Psi(k_{r_{i-1}+1},\ldots,k_{r_i};m_i \tau)\,.
\end{align}
Further, one can show (\cite[Construction 6.7]{B1}) that the $q$-series $\hat{g}^*$ satisfy the harmonic product formula, e.g. $\hat{g}^*(k_1) \hat{g}^*(k_2) = \hat{g}^*(k_1,k_2) + \hat{g}^*(k_2,k_1) + \hat{g}^*(k_1+k_2)$. We will reformulate this construction in Section \ref{sec:stmes}. To obtain the statement in Theorem \ref{thm:mesfourier}, one then uses the following theorem.

\begin{theorem}{\cite[Theorem 6]{Bo}}\label{thm:reductionmonotangent}
For $k_1,\dots,k_r \geq 2$ with $k=k_1+\dots+k_r$ the multitangent function can be written as 
\begin{align*}
    \Psi(k_1,\dots,k_r;\tau) = \sum_{\substack{1\leq j \leq r\\ l_1+\dots+l_r = k}} (-1)^{l_1+\dots+l_{j-1}+k_j+k} \prod_{\substack{1\leq i \leq r\\i\neq j}}\binom{l_i-1}{k_i-1} \zeta(l_1,\dots,l_{j-1}) \,\Psi_{l_j}(\tau)\, \zeta(l_r,l_{r-1},\dots,l_{j+1})\ .
\end{align*} 
Moreover, the terms with $\Psi(1;\tau)$ vanish. 
\end{theorem}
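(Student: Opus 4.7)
The plan is to prove this identity by partial fraction decomposition of the summand, followed by a change of variables that splits the triple sum into a left multiple zeta value, a monotangent function, and a right multiple zeta value. Fix the denominators in the defining sum \eqref{eq:defmultitangent} and view $f(X) := \prod_{i=1}^{r}(X+n_i)^{-k_i}$ as a rational function in $X$ with $r$ distinct poles of orders $k_1,\dots,k_r$ located at $-n_1,\dots,-n_r$. Its partial fraction decomposition takes the form
\begin{align*}
\prod_{i=1}^{r}\frac{1}{(\tau+n_i)^{k_i}} = \sum_{j=1}^{r}\sum_{l=1}^{k_j}\frac{A_{j,l}(n_1,\dots,n_r)}{(\tau+n_j)^{l}},
\end{align*}
where the coefficient $A_{j,l}$ is obtained by Taylor-expanding $\prod_{i\neq j}(X+n_i)^{-k_i}$ around $X=-n_j$. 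A direct iteration of $(-1)^{m}\binom{k_i+m-1}{m}(n_i-n_j)^{-k_i-m}$ and the substitution $l_i=k_i+m_i$ (using $\binom{l_i-1}{k_i-1}=0$ when $l_i<k_i$ to extend the range) yields
\begin{align*}
A_{j,l_j}(n_1,\dots,n_r) = \sum_{\substack{l_1+\dots+l_r=k\\ l_i\geq 1}} (-1)^{\sum_{i\neq j}(l_i-k_i)}\prod_{i\neq j}\binom{l_i-1}{k_i-1}(n_i-n_j)^{-l_i}.
\end{align*}

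Next I substitute this into \eqref{eq:defmultitangent} and, for each fixed $j$, perform the change of variables $a_i=n_i-n_j>0$ for $i<j$ and $b_i=n_j-n_i>0$ for $i>j$. The constraint $n_1>\dots>n_r$ becomes $a_1>\dots>a_{j-1}>0$ and $0<b_{j+1}<\dots<b_r$, and these new variables are independent of $n_j$. The factor $(n_j-n_i)^{-l_i}=(-1)^{l_i}(n_i-n_j)^{-l_i}$ for $i>j$ produces the additional sign $(-1)^{l_{j+1}+\dots+l_r}$. Combining with the sign in $A_{j,l_j}$ and using $l_1+\dots+l_r=k$ to rewrite $k_j+l_j+l_{j+1}+\dots+l_r\equiv l_1+\dots+l_{j-1}+k_j+k\pmod 2$ gives exactly the sign in the statement. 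The three resulting factors become
\begin{align*}
\sum_{a_1>\dots>a_{j-1}>0}\prod_{i<j}a_i^{-l_i}=\zeta(l_1,\dots,l_{j-1}),\qquad \sum_{n_j\in\Z}\frac{1}{(\tau+n_j)^{l_j}}=\Psi_{l_j}(\tau),
\end{align*}
and the sum over $0<b_{j+1}<\dots<b_r$, after reversing the order of indices, produces $\zeta(l_r,l_{r-1},\dots,l_{j+1})$. Assembling these three pieces for each $j$ and each $(l_1,\dots,l_r)$ proves the main formula.

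The last assertion requires more care, since the monotangent $\Psi(1;\tau)$ is only conditionally convergent and is not a priori defined without Eisenstein summation. For this I would show that the contribution of all terms with $l_j=1$ (for any $j$) vanishes when the summation over $n_j$ is understood symmetrically. Concretely, one fixes $(l_1,\dots,l_r)$ with some $l_{j_0}=1$ and interchanges the truncated sum over $n_{j_0}$ with the rest of the argument, then uses the absolute convergence of the remaining pieces to see that the coefficient multiplying the divergent $\sum_{n}(\tau+n)^{-1}$ can be rewritten as a closed combinatorial quantity. I expect this to reduce to the vanishing of a sum of binomial products weighted by alternating signs, which is the genuinely nontrivial step; the original proof in \cite{Bo} dispatches it by recognising the coefficient as the evaluation at $1$ of a polynomial that has a forced zero because of a relation among the $\zeta(l_1,\dots,l_{j-1})\,\zeta(l_r,\dots,l_{j+1})$ factors. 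This combinatorial cancellation is the main obstacle: the partial fraction argument itself is formal, but ensuring that the conditionally convergent monotangent of weight $1$ can be discarded requires an honest identity, and checking it carefully for all $r$ is the delicate part of the proof.
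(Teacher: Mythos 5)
Your derivation of the main formula is correct and follows essentially the same route as the paper: partial fraction decomposition of $\prod_i(\tau+n_i)^{-k_i}$, extension of the summation range via the vanishing of $\binom{l_i-1}{k_i-1}$ for $l_i<k_i$, the change of variables $a_i=n_i-n_j$, $b_i=n_j-n_i$, and the sign bookkeeping (which checks out). For that part there is nothing to add.

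The gap is in the second assertion. You correctly isolate the coefficient of the conditionally convergent $\Psi(1;\tau)$ and reduce its vanishing to an identity among products $\zeta(l_1,\dots,l_{j-1})\,\zeta(l_r,\dots,l_{j+1})$, but you then declare this "the main obstacle" and leave it unproved; the mechanism you attribute to the original proof (a polynomial with a forced zero at $1$) is not what makes it work. The identity you need is exactly Proposition \ref{prop:mzvantipoderelation}:
\begin{align*}
\sum_{\substack{1\leq j\leq r\\ l_1+\dots+l_{j-1}+l_{j+1}+\dots+l_r=k-1}} (-1)^{l_1+\dots+l_{j-1}+k_j}\prod_{\substack{1\leq i\leq r\\ i\neq j}}\binom{l_i-1}{k_i-1}\,\zeta^\shuffle(l_1,\dots,l_{j-1})\,\zeta^\shuffle(l_r,\dots,l_{j+1})=0\,,
\end{align*}
which is an instance of the antipode relation in the shuffle Hopf algebra: apply $\zeta^\shuffle$ to the word identity \eqref{eq:antipodewordrelation} with $a_m\cdots a_1=x^{k_1-1}y\cdots x^{k_r-1}$, and expand each $\zeta^\shuffle(x^{l_1-1}y\cdots x^{l_j-1}y\,x^n)$ as a binomial-weighted sum of $\zeta^\shuffle(l_1,\dots,l_j)$ to land precisely on the sum above (the overall factor $(-1)^k$ from the theorem's sign is constant in $j$ and irrelevant to the vanishing; since $k_i\geq 2$ forces $l_i\geq k_i\geq 2$ for $i\neq j$, all multiple zeta values occurring are in fact convergent, so the regularization is only bookkeeping). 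Without this input the ``moreover'' clause is unproved, and --- as you note yourself --- the main formula is not even well posed without it, since $\Psi(1;\tau)$ diverges. Supplying the antipode argument closes the gap and aligns your proof with the paper's.
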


\begin{proof}
This follows by using partial fraction decomposition 
\begin{align*}
  \frac{1}{(\tau+n_1)^{k_1}\cdots (\tau+n_r)^{k_r}} = \sum_{\substack{1\leq j \leq r\\ l_1+\dots+l_r = k}} \prod_{i=1}^{j-1} \frac{(-1)^{l_i}\binom{l_i-1}{k_i-1}}{(n_i-n_j)^{l_j}} \frac{(-1)^{k+k_j}}{(\tau+n_j)^{l_j}} \prod_{i=l+1}^r \frac{\binom{l_i-1}{k_i-1}}{(n_j-n_i)^{l_j}}\,.
\end{align*}
In order to show that the terms with  $\Psi(1;\tau)$ vanishes, one observes that their coefficient is exactly given by the formula in Proposition \ref{prop:mzvantipoderelation}. And therefore, we will see later, as a simple application of the antipode relation, that these vanish. 


\end{proof}
Applying Theorem \ref{thm:reductionmonotangent} to \eqref{eq:gastclassical}, we see by \eqref{eq:ghatclassical} that the $\hat{g}^*$ can be written as a $\mz$-linear combination of $\hat{g}$. This proves Theorem \ref{thm:mesfourier} since one can also show that all the appearing multiple zeta values have the correct depth. 

\begin{example}\label{ex:mes32} We give one explicit example in depth two. To write $\mathbb{G}(k_1,k_2)$ as sums over $\Psi$ we consider the following 
\begin{align*}
    \mathbb{G}&(k_1,k_2 ;\tau) = \sum_{m_1 \tau + n_1 \succ m_2 \tau + n_2 \succ 0} \frac{1}{(m_1 \tau + n_1)^{k_1}(m_2 \tau + n_2)^{k_2}} \\
    &= \left( \sum_{\substack{m_1=m_2=0\\n_1>n_2>0}} +   \sum_{\substack{m_1 > m_2=0\\n_1 \in \Z, n_2>0}} + \sum_{\substack{m_1 = m_2>0\\n_1 > n_2}} + \sum_{\substack{m_1 > m_2 > 0\\n_1, n_2 \in \Z}} \right)   \frac{1}{(m_1 \tau + n_1)^{k_1}(m_2 \tau + n_2)^{k_2}} \\
    &= \zeta(k_1,k_2) + \sum_{m>0} \Psi(k_1 ; m \tau) \zeta(k_2) + \sum_{m>0} \Psi(k_1,k_2;m \tau)  + \sum_{m_1>m_2>0} \Psi(k_1;m_1 \tau) \Psi(k_2;m_2\tau) \,.
\end{align*}
These four terms correspond to the following positions of lattice points we are summing over:

\fboxsep=0pt
\noindent
\begin{minipage}[t]{0.24\linewidth}
	\begin{tikzpicture}[scale=0.5]
		\draw[densely dotted,step=0.3,color=gray,thin] (-3,-0.5) grid (3,2.9);
		\draw [->] (0,-1.5) -- (0,3.5);
		\draw [->] (-3,0) -- (3,0);
		\draw [blue] (0,0) -- (1.5,0);
		\draw [red,->] (1.5,0) -- (3,0);
		\coordinate [label=center:\textcolor{blue}{$k_2$}] (R) at (1,-0.7);
		\coordinate [label=center:\textcolor{red}{$k_1$}] (R) at (2,-0.7);
		\fill[blue] (1,0) circle (3pt);
		\fill[red] (2,0) circle (3pt);
	\end{tikzpicture} 
\end{minipage}
\hfill%
\begin{minipage}[t]{0.24\linewidth}
	\begin{tikzpicture}[scale=0.5]
		\draw[densely dotted,step=0.3,color=gray,thin] (-3,-0.5) grid (3,2.9);
		\draw [->] (0,-1.5) -- (0,3.5);
		\draw [->] (-3,0) -- (3,0);
		\draw [blue,->] (0,0) -- (3,0);
		\draw [red,] (-3,2) -- (3,2);
		\coordinate [label=center:\textcolor{blue}{$k_2$}] (R) at (1.5,-0.7);
		\coordinate [label=center:\textcolor{red}{$k_1$}] (R) at (-1,1.3);
		\fill[blue] (1.5,0) circle (3pt);
		\fill[red] (-1,2) circle (3pt);
	\end{tikzpicture} 
\end{minipage}
\hfill%
\begin{minipage}[t]{0.24\linewidth}
	\begin{tikzpicture}[scale=0.5]
		\draw[densely dotted,step=0.3,color=gray,thin] (-3,-0.5) grid (3,2.9);
		\draw [->] (0,-1.5) -- (0,3.5);
		\draw [->] (-3,0) -- (3,0);
		\draw [red] (0.5,2) -- (3,2);
		\draw [blue] (-3,2) -- (0.5,2);
		\coordinate [label=center:\textcolor{red}{$k_1$}] (R) at (1.5,1.3);
		\coordinate [label=center:\textcolor{blue}{$k_2$}] (R) at (-1,1.3);
		\fill[red] (1.5,2) circle (3pt);
		\fill[blue] (-1,2) circle (3pt);
	\end{tikzpicture} 
\end{minipage}
\hfill%
\begin{minipage}[t]{0.24\linewidth}
	\begin{tikzpicture}[scale=0.5]
		\draw[densely dotted,step=0.3,color=gray,thin] (-3,-0.5) grid (3,2.9);
		\draw [->] (0,-1.5) -- (0,3.5);
		\draw [->] (-3,0) -- (3,0);
		\draw [blue,-] (-3,1) -- (3,1);
		\draw [red,] (-3,2.5) -- (3,2.5);
		\coordinate [label=center:\textcolor{blue}{$k_2$}] (R) at (1.5,0.3);
		\coordinate [label=center:\textcolor{red}{$k_1$}] (R) at (-1,1.8);
		\fill[blue] (1.5,1) circle (3pt);
		\fill[red] (-1,2.5) circle (3pt);
	\end{tikzpicture} 
\end{minipage}
From this we obtain \eqref{eq:classicalmesasmouldproduct}, i.e.
\begin{align*}
    \mathbb{G}_{k_1,k_2}(\tau) = \zeta(k_1,k_2) + \hat{g}^\ast(k_1) \zeta(k_2) + \hat{g}^\ast(k_1,k_2)\,,
\end{align*}
where $\hat{g}^\ast(k_1)=\sum_{m_1>0} \Psi_{k_1}(m_1\tau) = \hat{g}(k_1)$ and 
\begin{align*}
\hat{g}^\ast(k_1,k_2) &=  \sum_{m_1>0} \Psi_{k_1,k_2}(m_1 \tau) + \sum_{m_1>m_2>0}  \Psi_{k_1}(m_1 \tau)\Psi_{k_2}(m_2 \tau)\\
&= \sum_{m_1>0} \Psi_{k_1,k_2}(m_1 \tau) + \hat{g}(k_1,k_2)\,.
\end{align*}
Considering the special case $(k_1,k_2)=(3,2)$ one sees by partial fraction decomposition 
{\small
\begin{align*}
\Psi_{3,2}(\tau) &= \sum_{n_1 > n_2} \frac{1}{(\tau+n_1)^3 (\tau+n_2)^2} \\
&= \sum_{n_1 > n_2}  \left( \frac{1}{(n_1-n_2)^2 (\tau+n_1)^3} +\frac{2}{(n_1-n_2)^3 (\tau+n_1)^2} + \frac{3}{(n_1-n_2)^4 (\tau+n_1)} \right) \\
&+\sum_{n_1 > n_2} \left( \frac{1}{(n_1-n_2)^3 (\tau+n_2)^2}  -  \frac{3}{(n_1-n_2)^4 (\tau+n_2)} \right) =  3 \zeta(3) \Psi_2(\tau) +  \zeta(2) \Psi_3(\tau)  \,,
\end{align*}
}and therefore $\hat{g}^\ast(3,2) =  3 \zeta(3) \hat{g}(2) +  \zeta(2) \hat{g}(3) + \hat{g}(3,2)$. In total, we get 
\begin{align*}
\mathbb{G}(3,2;\tau) =\zeta(3,2) + 3 \zeta(3)  \hat{g}(2) + 2 \zeta(2)  \hat{g}(3) +   \hat{g}(3,2) \,. 
\end{align*}
\end{example}

\section{Algebraic setup}\label{sec:algsetup}

First, we will recall some basic facts on quasi-shuffle products (\cite{H}, \cite{HI}). Let $L$ be a countable set, called \emph{alphabet}, whose elements we will refer to as \emph{letters}. A monic monomial in the non-commutative polynomial ring $\QL$ will be called a \emph{word}, and we denote the empty word by $\one$. 
Suppose we have a commutative and associative product $\diamond$ on the vector space $\Q L$. Then the \emph{quasi-shuffle product}  $\qsh$ on $\QL$ is defined as the $\Q$-bilinear product, which satisfies $\one \qsh w = w \qsh \one = w$ for any word $w\in \QL$ and
\begin{align}\label{eq:qshdef}
	a w \qsh b v = a (w \qsh b v) + b (a w \qsh v) + (a \diamond b) (w \qsh  v) 
\end{align}
for any letters $a,b \in L$ and words $w, v \in \QL$. This gives a commutative $\Q$-algebra $(\QL, \qsh)$, which is called quasi-shuffle algebra.

For describing the algebraic structure for multiple zeta values, we consider two different alphabets. The first is $L_{xy}=\{x,y\}$ together with the product $a \diamond b = 0$ for $a,b \in L_{xy}$. We write $\h = \Q\langle L_{xy}\rangle=\Q\langle x,y \rangle$ and  the corresponding quasi-shuffle product $\sh=\qsh$ is called the \emph{shuffle product}. For example, we have 
\begin{align}\label{eq:shufflexy}
xy \shuffle xxy = xyxxy + 3 xxyxy + 6xxxyy\,.  
\end{align}
We define the following subspaces of $\h$
\begin{align*}
    \h^0 = \Q + x \h y \quad \subset\quad  \h^1 = \Q + \h y \quad \subset \quad \h\,.
\end{align*}
Notice that both spaces $\h^0$ and $\h^1$ are closed under $\sh$ and we denote the corresponding $\Q$-algebras by $\h^0_\sh$ and $\h^1_\sh$.

The second alphabet is $L_z=\{z_k \mid k\geq 1\}$ together with the product $z_{k_1} \diamond z_{k_2} = z_{k_1+k_2}$ for $k_1,k_2\geq 1$. The corresponding quasi-shuffle product $\ast=\qsh$ is called the \emph{stuffle product}. For example, we have
\begin{align}\label{eq:23stuffle}
z_2 \ast z_3 = z_2 z_3 + z_3 z_2 + z_5\,.
\end{align}
By identifying $z_k \leftrightarrow \overbrace{x\cdots x}^{k-1}y$, we can identify $\Q\langle L_z\rangle$ with $\h^1$, and we will not distinguish between them in the following, i.e., we will view $z_k$ as elements in $\h$. The space $\h^1$ equipped with the stuffle product gives a commutative $\Q$-algebra $\h^1_\ast$ with subalgebra $\h^0_\ast$. Moreover, for an index $\kk = (k_1,\dots,k_r) \in \Z_{\geq 1}^r$ we define the word $z_\kk = z_{k_1} \dots z_{k_r}$. Notice that, as a $\Q$-vector space, $\h^1$ is spanned by $z_\kk$ for arbitrary indices $\kk$ and $\h^0$ is spanned by $z_\kk$ for admissible indices $\kk$. 
By above identification, we can write \eqref{eq:shufflexy} as
\begin{align}\label{eq:23shuffle}
z_2 \shuffle z_3 = z_2 z_3 + 3z_3 z_2 + 6z_4z_1\,.
\end{align}
We also consider the following subspace of $\h^0$
\begin{align*}
    \h^{2} = \Q+\langle k_1,\dots,k_r \mid r\geq 1, k_1,\dots, k_r \geq 2\rangle_\Q\,,
\end{align*}
which is spanned by $z_\kk$ such that the multiple Eisenstein series $\mathbb{G}(\kk)$ is defined. Notice that both $\h^0$ and $\h^2$ are closed under $\ast$ but only $\h^0$ is closed under $\shuffle$ as we can see by \eqref{eq:23shuffle}. We obtain the following inclusion of $\Q$-algebras
\begin{align*}
	    \hz_\ast 	\subset    \h^0_\ast &\subset \h^1_\ast \,,\\
    \h^0_\shuffle &\subset \h^1_\shuffle \subset \h_\shuffle \,.
\end{align*}

In this note, we will consider various different objects defined for indices (e.g. multiple Hurwitz zeta functions, multiple zeta values, multiple Eisenstein series, multitangent functions, etc.). In most cases, we want to consider these objects as maps from one of the subspaces of $\h$ into some $\Q$-algebra. By abuse of notation, we will not distinguish between the maps and the objects. 
For example, for any $N\geq 1$, the truncated Hurwitz zeta function can be viewed as a $\Q$-linear map defined on the generators by\footnote{By $\mathcal{O}(U)$ we denote the ring of holomorphic functions on $U \subset \C$. Most of the functions we consider are holomorphic on $U=\C \backslash \Z$, but later we will restrict to the case $U=\Ha$.} 
\begin{align*}
	\zeta_N({-};x) : \h^1 &\longrightarrow \mathcal{O}(\C \backslash \Z),\\
	w = z_{k_1} \dots z_{k_r} &\longmapsto \zeta_N(w;x) := \zeta_N(k_1,\dots,k_r; x)
\end{align*}
and $\zeta_N(\one;x)=1$. In general, for all maps with domain $\h^1$ in this note, we will always assume that the empty word $\one$ gets mapped to $1$.
\begin{lemma}\label{lem:zetanalghom} For $N\geq 1$ the map $\zeta_N({-};x): \h^1_\ast \rightarrow \mathcal{O}(\C \backslash \Z)$ is an algebra homomorphism. 
\end{lemma}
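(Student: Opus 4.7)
The plan is to verify on generators that $\zeta_N(z_\kk \ast z_\mathbf{h};x) = \zeta_N(\kk;x)\,\zeta_N(\mathbf{h};x)$ for all indices $\kk = (k_1,\dots,k_r)$ and $\mathbf{h} = (h_1,\dots,h_s)$, after which the full statement follows by $\Q$-bilinearity together with the convention $\zeta_N(\one;x) = 1$. I proceed by induction on the total depth $r+s$. The base case, in which at least one of the indices is empty, is immediate from $\zeta_N(\one;x) = 1$ together with the fact that $\one$ is a unit for $\ast$.

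For the inductive step I assume $r, s \geq 1$ and write $\kk' = (k_2,\dots,k_r)$, $\mathbf{h}' = (h_2,\dots,h_s)$. The idea is to peel off the outermost summation index in each factor,
\[
\zeta_N(\kk;x)\,\zeta_N(\mathbf{h};x) = \sum_{\substack{N > n_1 > 0 \\ N > m_1 > 0}} \frac{1}{(x+n_1)^{k_1}(x+m_1)^{h_1}}\,\zeta_{n_1}(\kk';x)\,\zeta_{m_1}(\mathbf{h}';x),
\]
and then split the domain of summation into the three disjoint regions $n_1 > m_1$, $n_1 < m_1$, and $n_1 = m_1$. In the region $n_1 > m_1$ the $m_1$-subsum, at fixed $n_1$, reassembles into $\zeta_{n_1}(\mathbf{h};x)$, and by the inductive hypothesis applied with upper bound $n_1$ in place of $N$ this contribution becomes $\zeta_N(z_{k_1}(z_{\kk'} \ast z_\mathbf{h});x)$. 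Symmetrically the region $n_1 < m_1$ contributes $\zeta_N(z_{h_1}(z_\kk \ast z_{\mathbf{h}'});x)$, while on the diagonal $n_1 = m_1$ the two factors in the denominator combine and yield $\zeta_N(z_{k_1+h_1}(z_{\kk'} \ast z_{\mathbf{h}'});x)$. Summing the three pieces reproduces exactly the recursion
\[
z_\kk \ast z_\mathbf{h} = z_{k_1}(z_{\kk'} \ast z_\mathbf{h}) + z_{h_1}(z_\kk \ast z_{\mathbf{h}'}) + z_{k_1+h_1}(z_{\kk'} \ast z_{\mathbf{h}'})
\]
supplied by \eqref{eq:qshdef} for the pairing $z_{k_1} \diamond z_{h_1} = z_{k_1+h_1}$, closing the induction.

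The only point requiring attention is that the inductive call occurs at a strictly smaller upper bound than $N$, namely at $n_1$ or $m_1$; this is the reason the statement must be formulated uniformly in $N \geq 1$ rather than only in the eventual limit $N \to \infty$. No analytic issue intervenes, since for each fixed $N$ the function $\zeta_N(-;x)$ is a finite sum of rational functions in $x$ whose only possible poles lie in $\Z$, so the image lands in $\mathcal{O}(\C \backslash \Z)$ as required.
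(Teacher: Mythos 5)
Your proof is correct, and it is precisely the standard sum-splitting argument (peel off the outer index, split into $n_1>m_1$, $n_1<m_1$, $n_1=m_1$, and match the quasi-shuffle recursion \eqref{eq:qshdef}) that the paper does not write out but instead delegates to the cited reference \cite[Lemma 2.18]{B3}. The one subtlety — that the induction hypothesis is invoked at the truncation parameters $n_1$ and $m_1$ rather than at $N$, so the statement must be proved uniformly in $N$ — is exactly the right point to flag, and you handle it correctly.
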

\begin{proof} The proof of this is the same as for (truncated) multiple zeta values and is a special case of \cite[Lemma 2.18]{B3}. 
\end{proof}

Since the limit of $\zeta_N(w;0)$ as $N \rightarrow \infty$ just exists in the case $w \in \h^0$, we define
\begin{align}\label{eq:zetamap}
    \zeta: \h^0 &\longrightarrow \mz\,,\\
   w= z_\kk &\longmapsto \zeta(w)=\zeta(\kk)\,.
\end{align} 
For $\bullet \in \{\ast, \shuffle\}$ the map \eqref{eq:zetamap} gives $\Q$-algebra homomorphism  $\zeta: \h^0_\bullet \longrightarrow \mz$. This is a consequence of the definition as iterated sums (or Lemma \ref{lem:zetanalghom}) and the representation as iterated integrals (see \cite[Section 2]{B0}). For $w,v \in \h^0$ we obtain the relations 
\begin{align}\label{eq:finitedsh}
\zeta(w \ast v)=\zeta(w) \zeta(v)= \zeta(w \sh v)\,,
\end{align}
which are called \emph{finite double shuffle relations}.

\newcommand{\reg}{\operatorname{reg}}
\subsection{Regularization}
In this section, we will recall some results from \cite{IKZ}. For $\bullet \in \{\sh,\ast \}$ any element $w \in \h^1_\bullet$ can be written as a polynomial in $z_1=y$ with coefficients in $\h^0_\bullet$, i.e. there exist some $m\geq 0$ and $c_j(w) \in \h_0$ such that $w =\sum_{j=1}^m c_j(w) \bullet z_1^{\bullet j}$. Since this representation is unique, we obtain algebra isomorphisms
\begin{align*}
    \reg_\bullet: \h^1_\bullet &\longrightarrow \h^0_\bullet[T],\\
    w &\longmapsto   \sum_{j=1}^m c_j(w) T^{j} = \reg_\bullet(w)\,.
\end{align*}
In particular, any algebra homomorphism $f: \h^0_\bullet \rightarrow A$ into some $\Q$-algebra $A$ can be lifted to an algebra homomorphism 
\begin{align}\label{eq:regf}
f^\bullet: \h^1_\bullet &\longrightarrow A[T],\\
w &\longmapsto \sum_{j=1}^m f(c_j(w)) T^{j}\,,
\end{align}
i.e. we set $f^\bullet = f \circ \reg_\bullet$ after extending $f$ to $\h^0_\bullet[T]$ coefficient-wise. In the case of the multiple zeta values map \eqref{eq:zetamap}, this gives the \emph{shuffle regularized multiple zeta values} $\zeta^\sh$ and the \emph{stuffle regularized multiple zeta values} $\zeta^\ast$. By the work, \cite{IKZ}, these two regularizations differ, but their difference can be described explicitly: Define the $\R$-linear map $\rho: \R[T]\rightarrow \R[T]$ by 
\begin{align}\label{eq:defrho}
 \rho(e^{Tu}) := \exp\left(Tu+ \sum_{n=2}^{\infty} \frac{(-1)^n}{n} \zeta(n) u^n \right)	\,.
\end{align}
Then we have $\zeta^\sh = \rho \circ \zeta^\ast$ (\cite[Theorem 1]{IKZ}). The relations among multiple zeta values obtained from this comparison together with the finite double shuffle relations are the \emph{extended double shuffle relations}, which conjecturally give all relations among multiple zeta values. 

 \newcommand{\Hom}{\operatorname{Hom}}
\subsection{Hopf algebra structures}
By the work of Hoffman (\cite{H},\cite{HI}), any quasi-shuffle algebra can be equipped with the structure of a Hopf algebra \cite[Section 3]{H}, where the coproduct is given for $w \in \QL$ by \emph{deconcatenation coproduct}
\begin{align}\label{eq:coproduct}
    \Delta(w) = \sum_{uv = w} u \otimes v\,.
\end{align}
The antipode in this Hopf algebra can be described explicitly (\cite[Theorem 3.2]{H}). For example, if $\diamond$ is the trivial product, then the corresponding antipode for $a_1,\ldots,a_r\in L$ is given by
\begin{align} \label{eq:antipode} 
S(a_1\cdots a_r)=(-1)^r a_r\dots a_1\,.
\end{align}
For any Hopf algebra $A$ with coproduct $\Delta$ and an $\Q$-algebra $B$ with multiplication $m$ and $f,g \in \Hom(A,B)$ the \emph{convolution product} is defined by
\begin{align*}
	f \star g = m \circ (f \otimes g) \circ \Delta\,.
\end{align*}
As a simple fact from the theory of Hopf algebras, we get the following lemma, which will play an important role in the construction in the next section.
\begin{lemma}\label{lem:convprod} If $f,g \in \Hom(A,B)$ then $f \star g \in \Hom(A,B)$.
\end{lemma}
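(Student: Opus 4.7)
The plan is to verify multiplicativity of $f \star g$ directly from the bialgebra structure of $A$ and the commutativity of $B$ (which is implicit throughout the paper, since every target algebra we consider—$\mathcal O(U)$, $\mz$, $\mz[T]$, various $q$-series rings—is commutative). Using Sweedler notation, write $\Delta(a) = \sum a_{(1)} \otimes a_{(2)}$, so that
\begin{align*}
(f \star g)(a) = \sum f(a_{(1)})\, g(a_{(2)})
\end{align*}
for $a \in A$. For $a, b \in A$, I would compute $(f\star g)(ab)$ by applying the fact that $\Delta$ is itself an algebra homomorphism (this is the bialgebra axiom, and every Hopf algebra is a bialgebra), so $\Delta(ab) = \Delta(a)\,\Delta(b) = \sum a_{(1)} b_{(1)} \otimes a_{(2)} b_{(2)}$ in the tensor-product algebra.

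Expanding the right-hand side via the multiplicativity of $f$ and $g$ gives
\begin{align*}
(f \star g)(ab) = \sum f(a_{(1)})\, f(b_{(1)})\, g(a_{(2)})\, g(b_{(2)}),
\end{align*}
while
\begin{align*}
(f\star g)(a)\,(f\star g)(b) = \sum f(a_{(1)})\, g(a_{(2)})\, f(b_{(1)})\, g(b_{(2)}).
\end{align*}
The two expressions coincide precisely because $B$ is commutative, so that $g(a_{(2)})$ and $f(b_{(1)})$ can be interchanged. For the unit, one uses $\Delta(1_A) = 1_A \otimes 1_A$, giving $(f \star g)(1_A) = f(1_A) g(1_A) = 1_B$.

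The only mildly subtle point—hardly an obstacle but worth flagging—is the appeal to commutativity of $B$; without it, the statement fails. In the setting of this paper, commutativity is always available, and the bialgebra axiom for the deconcatenation coproduct on a quasi-shuffle algebra is part of Hoffman's theorem recalled above, so no further verification is required.
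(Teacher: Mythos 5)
Your proof is correct and is exactly the standard argument the paper has in mind: the lemma is stated without proof as ``a simple fact from the theory of Hopf algebras,'' and your computation via the bialgebra axiom $\Delta(ab)=\Delta(a)\Delta(b)$ together with the commutativity of $B$ (which indeed holds for every target algebra in the paper) is the intended one. You were also right to flag commutativity of $B$ as the essential hypothesis --- without it (or cocommutativity of $A$) the statement would fail.
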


The antipode $S: A \rightarrow A$ is the inverse of $\operatorname{Id}$ with respect to $\star$, i.e. 
\begin{align*}
(S \star \operatorname{Id})(w) = \begin{cases} 1, w=\emptyset\\0, \text{else}\end{cases}\,.
\end{align*}
As a direct consequence of \eqref{eq:antipode} one obtains that for any non-empty word $w=a_1\dots a_m$ in $A=\QL$ we have
\begin{align}\label{eq:antipodewordrelation}
\sum_{i=0}^m (-1)^i a_ia_{i-1}\dots a_1 \shuffle a_{i+1}a_{i+2}\dots a_m=0.
\end{align}
This can be used to prove the following relations among the shuffle regularized multiple zeta values. 
\begin{proposition}\label{prop:mzvantipoderelation}
For $k_1,\dots,k_r \geq 1$ and $k=k_1+\dots+k_r$ we have
\begin{align*}
    \sum_{\substack{1\leq j \leq r\\ l_1+\dots+l_{j-1}+l_{j+1}+\dots+l_r = k-1}} (-1)^{e_j} \prod_{\substack{1\leq i \leq r\\i\neq j}}\binom{l_i-1}{k_i-1} \zeta^\shuffle(l_1,\dots,l_{j-1}) \zeta^\shuffle(l_r,l_{r-1},\dots,l_{j+1}) = 0\,,
\end{align*}
where $e_j = l_1+\dots+l_{j-1}+k_j$. 
\end{proposition}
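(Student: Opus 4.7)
My plan is to derive Proposition~\ref{prop:mzvantipoderelation} from the shuffle antipode identity \eqref{eq:antipodewordrelation} applied to the index word $w_\kk = z_\kk = x^{k_1-1}y\cdots x^{k_r-1}y\in\h$, combined with an extension of the shuffle regularization to all of $\h$. The first preparatory step is to extend $\zeta^\shuffle\colon(\h^1,\sh)\to\R[T]$ to a $\Q$-algebra homomorphism $\widetilde{\zeta}^\shuffle\colon(\h,\sh)\to\R[T]$ by declaring $\widetilde{\zeta}^\shuffle(x):=0$. Since $x$ is, under the ordering $x<y$, the only Lyndon word in $\{x,y\}^{\ast}$ not ending in $y$, Radford's theorem realises $(\h,\sh)$ as the polynomial algebra $(\h^1,\sh)[x]$, so the extension exists and is unique; in particular, $\widetilde{\zeta}^\shuffle(w\sh x^a) = 0$ for every $w\in\h$ and every $a\geq 1$.

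The second step is a \emph{shuffle-shift} formula: for $w' = z_{k'_1}\cdots z_{k'_m}\in\h^1$ and $a\geq 0$,
\begin{align*}
\widetilde{\zeta}^\shuffle(w'\cdot x^a) \;=\; (-1)^a\sum_{\substack{l_i\geq k'_i\\ \sum_i(l_i-k'_i)=a}}\prod_{i=1}^m\binom{l_i-1}{k'_i-1}\,\zeta^\shuffle(l_1,\ldots,l_m).
\end{align*}
This is obtained by expanding $w'\sh x^a$ as a sum of concatenated words $z_{k'_1+b_1}\cdots z_{k'_m+b_m}\cdot x^{b_{m+1}}$ with multiplicities $\prod_i\binom{k'_i+b_i-1}{b_i}$ (coming from counting the positions at which one may insert the shuffled $x$'s into each block of $w'$), then applying $\widetilde{\zeta}^\shuffle$ to both sides, using the vanishing from the previous paragraph, and solving inductively on $a$ for the term $b_{m+1}=a$.

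Finally, I apply \eqref{eq:antipodewordrelation} to $w_\kk$: parametrising the deconcatenations $z_\kk = u_iv_i$ by pairs $(j,s)$ with $1\leq j\leq r$, $0\leq s\leq k_j-1$ and $i = k_1+\cdots+k_{j-1}+s$, one finds $\widetilde{u}_i = z_{s+1}z_{k_{j-1}}\cdots z_{k_2}\cdot x^{k_1-1}$ and $v_i = z_{k_j-s}z_{k_{j+1}}\cdots z_{k_r}$. Evaluating $\widetilde{\zeta}^\shuffle$ on both factors---using the shuffle-shift formula to expand $\widetilde{\zeta}^\shuffle(\widetilde{u}_i)$ into a $\zeta^\shuffle$-linear combination with binomial coefficients $\binom{l_i-1}{k_i-1}$---and collecting the signs $(-1)^i(-1)^{k_1-1}$ with the reindexing $l_j:=k_j-s$ (so that $l_j$ plays the role of the ``skipped'' index in Proposition~\ref{prop:mzvantipoderelation}, the constraint $l_j=1$ being forced by matching weights), one recognises the resulting identity as the vanishing of the left-hand side of the proposition.

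The principal technical obstacle is the combinatorial bookkeeping at this last stage. Because the antipode reverses $u_i$, the indices produced by the shuffle-shift formula appear in the order $(s+1,k_{j-1},\ldots,k_2)$ rather than in the increasing order $(l_1,\ldots,l_{j-1})$ of the left factor of Proposition~\ref{prop:mzvantipoderelation}, and the right factor $v_i$ does not immediately carry binomial shifts on the indices $i>j$ (in contrast with the proposition). One must therefore check carefully that the reversal in $\widetilde{u}_i$, together with the antipode signs $(-1)^i$ and the degree-matching $\sum_{i\neq j}l_i = k-1$, combines with trivial binomials $\binom{k_i-1}{k_i-1}=1$ on the right and with the nontrivial ones on the left to reproduce exactly the product $\zeta^\shuffle(l_1,\dots,l_{j-1})\zeta^\shuffle(l_r,\dots,l_{j+1})$ with sign $(-1)^{e_j}$. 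In the small cases ($r\leq 2$) the identity collapses to the symmetry $\binom{n}{k}=\binom{n}{n-k}$, but for general $\kk$ the verification of the full combinatorial match is the heart of the proof.
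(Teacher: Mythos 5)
Your first two steps are exactly the ingredients of the paper's proof: extending $\zeta^\shuffle$ to $(\h,\sh)=(\h^1,\sh)[x]$ by $\widetilde{\zeta}^\shuffle(x)=0$, together with your shuffle-shift formula, is precisely the paper's key identity
\begin{align*}
\zeta^\shuffle(z_{k_1}\cdots z_{k_r}x^n)=(-1)^n\sum_{l_1+\dots+l_r=k_1+\dots+k_r+n}\prod_{i=1}^r\binom{l_i-1}{k_i-1}\zeta^\shuffle(l_1,\dots,l_r)\,,
\end{align*}
proved there by induction. The gap is in the final step: you apply \eqref{eq:antipodewordrelation} to the full word $z_\kk=x^{k_1-1}y\cdots x^{k_r-1}y$, whereas the proposition comes from applying it to $x^{k_1-1}y\cdots x^{k_r-1}$, i.e.\ to $z_\kk$ with its last letter $y$ deleted. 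That choice is what makes the bookkeeping come out: cutting this word inside the $j$-th block of $x$'s gives a left piece $z_{k_1}\cdots z_{k_{j-1}}x^{s}$ and a reversed right piece $z_{k_r}\cdots z_{k_{j+1}}x^{k_j-1-s}$, so \emph{both} factors end in strings of $x$'s, the shift formula applies on both sides, the weight $k_j-1$ of the block at the cut is distributed over all entries $i\neq j$, and one obtains exactly the binomials $\binom{l_i-1}{k_i-1}$ for all $i\neq j$, the factors $\zeta^\shuffle(l_1,\dots,l_{j-1})\,\zeta^\shuffle(l_r,\dots,l_{j+1})$ of total depth $r-1$, and the sign $(-1)^{e_j}$ up to an overall sign $(-1)^k$.

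With your word the resulting identity is a different (albeit valid) relation and cannot be recognized as the proposition. After reversal the left piece is $z_{s+1}z_{k_{j-1}}\cdots z_{k_2}x^{k_1-1}$, so the weight being distributed is always $k_1-1$ rather than $k_j-1$; the right piece $z_{k_j-s}z_{k_{j+1}}\cdots z_{k_r}$ ends in $y$ and carries no binomial shifts; the two factors have total depth $r$, not $r-1$; the $i=0$ term is $\zeta^\shuffle(k_1,\dots,k_r)$ itself and the $i=k$ term (which your parametrisation by $(j,s)$ omits) is $\pm\widetilde{\zeta}^\shuffle(z_1z_{k_r}\cdots z_{k_2}x^{k_1-1})$ --- none of which occur in the proposition. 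Your hope that weight-matching forces $l_j=1$ and makes the right-hand binomials trivial is not correct: $k_j-s$ runs over $1,\dots,k_j$, and in the proposition the binomials $\binom{l_i-1}{k_i-1}$ with $i>j$ are genuinely nontrivial (already for $r=2$, $j=1$ one has $l_2=k_1+k_2-1>k_2$). Concretely, for $r=2$ your relation involves $\zeta^\shuffle(k_1,k_2)$, products $\zeta^\shuffle(k_1+s)\zeta^\shuffle(k_2-s)$ and double zetas $\zeta^\shuffle(1+a,k_2+b)$, while the proposition for $r=2$ reduces to $\bigl(\binom{k-2}{k_2-1}-\binom{k-2}{k_1-1}\bigr)\zeta^\shuffle(k-1)=0$. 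Replacing $z_\kk$ by $x^{k_1-1}y\cdots x^{k_r-1}$ and running your own steps on both deconcatenation pieces repairs the argument and recovers the paper's proof.
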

\begin{proof} By induction and the definition of the shuffle product, one can show that for any $n\geq 1$ 
\begin{align*}
\zeta^\shuffle(x^{k_1-1}y \dots x^{k_r-1}y x^n)= (-1)^n \sum_{l_1+\dots+l_r=k_1+\dots+k_r+n} \prod_{i=1}^r \binom{l_i-1}{k_i-1} \zeta^\shuffle(l_1,\dots,l_r)\,.
\end{align*}
The statement then follows by using the following relation, which is a consequence of \eqref{eq:antipodewordrelation} for $a_m\dots a_1 = x^{k_1-1}y \dots x^{k_r-1}$:
\begin{align*}
	 \sum_{i=0}^m (-1)^i \zeta^\shuffle(a_1 \dots a_i)\zeta^\shuffle( a_m a_{m-1} \dots a_{i+1}) =0\,.
\end{align*}
\end{proof}

\section{Stuffle regularized multiple Eisenstein series}\label{sec:stmes}
Using the algebraic setup described in the previous section, we will now give a new interpretation of the regularization of the multitangent functions in \cite[Section 7]{Bo} and the stuffle regularization of multiple Eisenstein series presented in \cite[Section 6]{B1}. The following construction is motivated by the original calculation of the Fourier expansion explained in Section \ref{sec:mes}. To calculate the Fourier expansion for the double Eisenstein series $\mathbb{G}(k_1,k_2)$ one considers for $k_1,k_2 \geq 2$ the four terms
{\small
\begin{align*}
    \mathbb{G}(k_1,k_2 ;\tau)= \zeta(k_1,k_2) + \sum_{m>0} \Psi(k_1 ; m \tau) \zeta(k_2) + \sum_{m>0} \Psi(k_1,k_2;m \tau)  + \sum_{m_1>m_2>0} \Psi(k_1;m_1 \tau) \Psi(k_2;m_2\tau) \,.
\end{align*}}
Combining some of these terms into 
\begin{align}\begin{split}\label{eq:gastinpsi12}
\hat{g}^\ast(k_1) &=  \sum_{m>0} \Psi(k_1 ; m \tau),\\
\hat{g}^\ast(k_1,k_2) &= \sum_{m>0} \Psi(k_1,k_2;m \tau)  + \sum_{m_1>m_2>0} \Psi(k_1;m_1 \tau) \Psi(k_2;m_2\tau)\,
\end{split}
\end{align}
we obtain $
\mathbb{G}(k_1,k_2 ;\tau)= \zeta(k_1,k_2) +  g^\ast(k_1) \zeta(k_2) +g^\ast(k_1,k_2)$.

\subsection{Construction of stuffle regularized multiple Eisenstein series}
We now want to generalize this idea to the truncated versions. For this, we define for $M\geq 1$ 
\begin{align*}
    \Z_M = \{ m \in \Z \mid |m|<M\}\,.
\end{align*}
and for $\tau \in \Ha$ define on $\Z \tau + \Z$ the order $\succ$ as before by 
\begin{align*}
    m_1 \tau + n_1 \succ m_2 \tau + n_2 \quad :\Leftrightarrow \quad (m_1 > m_2) \,\text{  or  }\, (m_1 = m_2  \text{ and } n_1 > n_2 )\,.
\end{align*}
We illustrate this order in the following diagram:
\vspace{-0.5cm}
\begin{figure}[H]
    \begin{center}
        \begin{tikzpicture}[scale=0.4]
\draw[dotted,step=1,color=gray,thin] (-6.9,-2) grid (6.9,5.9); 
\draw [->,thick] (0,-1) -- (0,6.5) node (yaxis) [above] {$m$};
\draw [->,thick] (-7,0) -- (7,0) node (xaxis) [right] {$n$};

\draw[black] (-1,-0.2) -- (-1,0.2);
\draw[black] (-2,-0.2) -- (-2,0.2);
\draw[black] (-3,-0.2) -- (-3,0.2);
\draw[black] (-4,-0.2) -- (-4,0.2);
\draw[black] (-5,-0.2) -- (-5,0.2);
\draw[red,thick] (-6,-0.2) -- (-6,0.2);

\draw[black] (1,-0.2) -- (1,0.2);
\draw[black] (2,-0.2) -- (2,0.2);
\draw[black] (3,-0.2) -- (3,0.2);
\draw[black] (4,-0.2) -- (4,0.2);
\draw[black] (5,-0.2) -- (5,0.2);
\draw[red,thick] (6,-0.2) -- (6,0.2);

\draw[blue,thick] (-0.2,5) -- (0.2,5);

\coordinate [label=center:\textcolor{red}{$-6$}] (R) at (-6,-0.7);
\coordinate [label=center:\textcolor{red}{$6$}] (R) at (6,-0.7);

\coordinate [label=center:\textcolor{blue}{$5$}] (R) at (-0.6, 5);

\fill[orange] (-5,1) circle (4pt);
\fill[orange] (-4,1) circle (4pt);
\fill[orange] (-3,1) circle (4pt);
\fill[orange] (-2,1) circle (4pt);
\fill[orange] (-1,1) circle (4pt);
\fill[orange] (0,1) circle (4pt);
\fill[orange] (1,1) circle (4pt);
\fill[orange] (2,1) circle (4pt);
\fill[orange] (3,1) circle (4pt);
\fill[orange] (4,1) circle (4pt);
\fill[orange] (5,1) circle (4pt);
\fill[orange] (-5,2) circle (4pt);
\fill[orange] (-4,2) circle (4pt);
\fill[orange] (-3,2) circle (4pt);
\fill[orange] (-2,2) circle (4pt);
\fill[orange] (-1,2) circle (4pt);
\fill[orange] (0,2) circle (4pt);
\fill[orange] (1,2) circle (4pt);
\fill[orange] (2,2) circle (4pt);
\fill[orange] (3,2) circle (4pt);
\fill[orange] (4,2) circle (4pt);
\fill[orange] (5,2) circle (4pt);
\fill[orange] (-5,3) circle (4pt);
\fill[orange] (-4,3) circle (4pt);
\fill[orange] (-3,3) circle (4pt);
\fill[orange] (-2,3) circle (4pt);
\fill[orange] (-1,3) circle (4pt);
\fill[orange] (0,3) circle (4pt);
\fill[orange] (1,3) circle (4pt);
\fill[orange] (2,3) circle (4pt);
\fill[orange] (3,3) circle (4pt);
\fill[orange] (4,3) circle (4pt);
\fill[orange] (5,3) circle (4pt);
\fill[orange] (-5,4) circle (4pt);
\fill[orange] (-4,4) circle (4pt);
\fill[orange] (-3,4) circle (4pt);
\fill[orange] (-2,4) circle (4pt);
\fill[orange] (-1,4) circle (4pt);
\fill[orange] (0,4) circle (4pt);
\fill[orange] (1,4) circle (4pt);
\fill[orange] (2,4) circle (4pt);
\fill[orange] (3,4) circle (4pt);
\fill[orange] (4,4) circle (4pt);
\fill[orange] (5,4) circle (4pt);
\fill[orange] (1,0) circle (4pt);
\fill[orange] (2,0) circle (4pt);
\fill[orange] (3,0) circle (4pt);
\fill[orange] (4,0) circle (4pt);
\fill[orange] (5,0) circle (4pt);

\end{tikzpicture}
        
        All the points $
            {\color{orange}\lambda}\in \Z_{{\color{blue} 5}} i + \Z_{{\color{red} 6}}$ satisfying ${\color{orange}\lambda \succ 0}$.
    \end{center}
\end{figure}

\begin{definition}
For integers $k_1,\ldots,k_{r}\ge1$, and $M,N\ge 1$ we define the \emph{ truncated multiple Eisenstein series}  by 
\[ \mathbb{G}_{M,N}(k_1,\ldots,k_r ;\tau) =   \sum_{\substack{ \lambda_1\succ \cdots\succ \lambda_r \succ 0\\ \lambda_i\in\Z_M \tau+\Z_N}} \frac{1}{\lambda_1^{k_1}\cdots \lambda_r^{k_r}} \,. \]
\end{definition}
For $k_1,\dots,k_r\geq 2$, we obtain the multiple Eisenstein series from Section \ref{sec:mes} by\footnote{Notice that we also allow $k_1=2$ here. In this case, the sum is not absolutely convergent but conditionally convergent and all results mentioned in Section \ref{sec:mes} still hold.}
\begin{align*}
   \mathbb{G}(k_1,\ldots,k_r ;\tau) = \lim_{M \rightarrow \infty}  \lim_{N \rightarrow \infty}  \mathbb{G}_{M,N}(k_1,\ldots,k_r ;\tau)\,.
\end{align*}

Fixing $M,N\geq 1$ we can view the truncated multiple Eisenstein series as $\Q$-linear maps $\mathbb{G}_{M,N}(-;\tau): \h^1 \rightarrow \mathcal{O}(\Ha)$. One can check directly that these are algebra homomorphism with respect to the stuffle product $\ast$. We will show this in the following by rewriting them as the convolution product of two other algebra homomorphism. For this we first define the following.
\begin{definition}
For $k_1,\dots,k_r \geq 1$,$N\geq 1$ and $x \in \C \backslash \Z$ define the  \emph{truncated multitangent function} by
\[ \Psi_N(k_1,\ldots,k_r; x) := \sum_{\substack{N>n_1>\cdots >n_r > -N\\n_i \in \Z}} \frac{1}{(x+n_1)^{k_1}\cdots (x+n_r)^{k_r}}.\]
\end{definition}
For $k_1,k_r \geq 2$ the {multitangent function from Section \ref{sec:mes} are given by $\Psi(k_1,\ldots,k_r; x) = \lim_{N\rightarrow \infty} \Psi_N(k_1,\ldots,k_r; x)\,.$

\begin{definition}
For $M,N\geq 1$ define the map $\hat{g}_{M,N}({-};\tau): \h^1 \rightarrow \mathcal{O}(\Ha)$  by 
\begin{align*}
	\hat{g}_{M,N}({-};\tau) = \cop_{m=1}^{M-1} \Psi_N({-}; m\tau) \,,
\end{align*}
where we write $\cop_{j=a}^{b} f_j= f_{b} \star  f_{b-1} \star \dots \star f_a$.
\end{definition}
Notice that this generalizes \eqref{eq:gastinpsi12} for the truncated version, since for $w\in \h^1$ we have 
\begin{align*}
    \hat{g}_{M,N}(w;\tau) = \sum_{\substack{j \geq 1\\ w_1 \dots w_j = w \\ w_1,\dots,w_j \not = \emptyset}} \sum_{M> m_1 > \dots > m_j > 0} \Psi_N(w_1; m_1 \tau ) \dots \Psi_N(w_j; m_j \tau)\,.
\end{align*}
As a truncated version of  \eqref{eq:classicalmesasmouldproduct} we get the following.
\begin{proposition}
For any $M,N\geq 1$ we have 
\begin{align*}
	\mathbb{G}_{M,N}  = \hat{g}_{M,N} \star \zeta_N\,.
\end{align*}
\end{proposition}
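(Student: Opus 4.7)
The plan is to partition the defining sum for $\mathbb{G}_{M,N}(k_1,\ldots,k_r;\tau)$ according to the $\Z\tau$-components of the lattice points $\lambda_i = m_i\tau + n_i$, and then recognize the two resulting factors as $\hat{g}_{M,N}$ and $\zeta_N$ applied to a deconcatenation split of $w = z_{k_1}\cdots z_{k_r}$. Since $\lambda_1 \succ \cdots \succ \lambda_r \succ 0$, the coordinates must satisfy $m_1 \geq m_2 \geq \cdots \geq m_r \geq 0$; moreover, once some $m_i$ vanishes, all subsequent $m$-coordinates vanish and the corresponding $n$-coordinates form a strictly decreasing chain in $\{1,\ldots,N-1\}$. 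I would therefore stratify the sum by the unique index $j \in \{0,1,\ldots,r\}$ with $m_1,\ldots,m_j > 0$ and $m_{j+1} = \cdots = m_r = 0$.

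For each such $j$, the sum factors into two independent pieces. The $m = 0$ half is immediate: summing $(n_{j+1}^{k_{j+1}}\cdots n_r^{k_r})^{-1}$ over $N > n_{j+1} > \cdots > n_r > 0$ gives $\zeta_N(k_{j+1},\ldots,k_r)$. The positive-$m$ half requires more care: I would group the points $\lambda_1,\ldots,\lambda_j$ into maximal blocks of equal $m$-coordinate. Such a configuration is parametrized by a composition $j = s_1 + \cdots + s_l$ (the block sizes), a strictly decreasing chain $M > m'_1 > \cdots > m'_l > 0$ of the distinct $m$-values, and, within each block, a strictly decreasing choice of $n$-coordinates in $(-N,N)$. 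The inner sum over the $i$-th block is exactly the truncated multitangent $\Psi_N(k_{r_{i-1}+1},\ldots,k_{r_i}; m'_i\tau)$ with $r_i = s_1+\cdots+s_i$, and summing over all compositions of $j$ and all chains $m'_1 > \cdots > m'_l$ reproduces, directly from the definition, $\hat{g}_{M,N}(k_1,\ldots,k_j;\tau)$.

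Collecting the two halves across $j = 0,1,\ldots,r$ then yields
\[
\mathbb{G}_{M,N}(w;\tau) = \sum_{uv = w} \hat{g}_{M,N}(u;\tau)\,\zeta_N(v),
\]
and the right-hand side is exactly $(\hat{g}_{M,N} \star \zeta_N)(w)$ by the deconcatenation coproduct \eqref{eq:coproduct} and the definition of $\star$. The only genuinely non-routine step is the block decomposition of the positive-$m$ part and its matching with the iterated convolution defining $\hat{g}_{M,N}$; this is the main obstacle, in the sense that the combinatorial bookkeeping must be set up so that the composition data aligns exactly with the deconcatenation of $w$. Once that alignment is made the statement reduces to a formal verification, giving a truncated analogue of the derivation of \eqref{eq:classicalmesasmouldproduct} sketched in Example \ref{ex:mes32}.
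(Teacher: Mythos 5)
Your proof is correct and follows the same route the paper takes (and merely sketches): partition the sum defining $\mathbb{G}_{M,N}$ according to the pattern of equalities among the $m$-coordinates, which is exactly the $2^r$-fold decomposition the paper cites, with the cut between positive and zero $m$'s giving the deconcatenation $w=uv$ and the block structure of the positive part matching the iterated convolution defining $\hat{g}_{M,N}$. Your write-up is essentially a fully detailed version of the paper's one-line argument, consistent with Example \ref{ex:mes32}.
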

\begin{proof} The argument for this is the same as for the usual multiple Eisenstein series as described in \cite{B0}, \cite{B3} or \cite{BT}. For a given word $w=z_{k_1}\cdots z_{k_r}$ the terms in $\mathbb{G}_{M,N}(w)$ are grouped into $2^r$ groups, which all can be expressed as products of $\hat{g}_{M,N}(w_2)$ and $\zeta_N(w_1)$ with $w=w_1 w_2$.
\end{proof}
Define for $k_1,\dots,k_r \geq 1$, $x\in \C \backslash \Z$ and $N\geq 1$
\begin{align*}
    C(k_1,\dots,k_r;x) &= \begin{cases} 1, &r=0,\\
        \frac{1}{x^{k_1}}, &r=1,\\
        0, &r\geq 2
    \end{cases}\,,\\
    \zeta^{-}_N(k_1,\dots,k_r; x) &= \sum_{0> n_1 > \dots > n_r > -N} \frac{1}{(x+n_1)^{k_1} \dots (x + n_r)^{k_r}}\,.
\end{align*}
Again we can view these as $\Q$-linear maps $C, \zeta^{-}_N: \h^1 \rightarrow \mathcal{O}(\C \backslash \Z)$ defined on $z_{k_1}\dots z_{k_r}$ by the above formulas. 
\begin{proposition}\label{prop:czetanalghom}
The maps $C, \zeta^{-}_N: \h^1_\ast \rightarrow \mathcal{O}(\C \backslash \Z)$ are algebra homomorphisms. 
\end{proposition}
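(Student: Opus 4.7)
The plan is to verify the homomorphism property $f(u \ast v; x) = f(u; x) f(v; x)$ separately for $f = C$ and $f = \zeta^-_N$, handling $C$ by a short depth-based case analysis and $\zeta^-_N$ by invoking the same iterated-sum argument that establishes Lemma~\ref{lem:zetanalghom}.

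For $C$, I would exploit the fact that $C(-;x)$ is supported on words of depth at most $1$. Write $u = z_\kk$ and $v = z_\mathbf{l}$ with depths $r$ and $s$. The cases $u = \one$ or $v = \one$ are immediate. Otherwise, I use the fact that every word appearing in the stuffle expansion of $u \ast v$ has depth between $\max(r,s)$ and $r+s$, since a stuffle merges at most $\min(r,s)$ pairs of letters (this is a direct consequence of the recursion \eqref{eq:qshdef}). In particular, if $\max(r,s) \geq 2$, then every term of $u \ast v$ has depth $\geq 2$, so $C(u \ast v; x) = 0$; on the other side, $C(u;x) C(v;x) = 0$ as well, since one factor vanishes. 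The only remaining case is $r = s = 1$, where $z_a \ast z_b = z_a z_b + z_b z_a + z_{a+b}$ and applying $C(-;x)$ yields $0 + 0 + x^{-(a+b)} = x^{-a} x^{-b} = C(z_a;x) C(z_b;x)$.

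For $\zeta^-_N$, the argument is formally identical to the proof of Lemma~\ref{lem:zetanalghom}, and indeed is a direct application of \cite[Lemma 2.18]{B3}. Expanding
\begin{align*}
\zeta^-_N(\kk; x)\,\zeta^-_N(\mathbf{l}; x) = \sum_{\substack{0 > n_1 > \dots > n_r > -N\\ 0 > m_1 > \dots > m_s > -N}} \prod_{i=1}^r \frac{1}{(x+n_i)^{k_i}} \prod_{j=1}^s \frac{1}{(x+m_j)^{l_j}}
\end{align*}
and partitioning the double sum according to the relative order of the combined index multiset $\{n_i\} \sqcup \{m_j\}$ inside $\{-N+1,\dots,-1\}$ produces exactly one term per word in the stuffle expansion of $z_\kk \ast z_\mathbf{l}$: a coincidence $n_i = m_j$ contributes $(x + n_i)^{-(k_i + l_j)}$, which matches the merged letter $z_{k_i + l_j} = z_{k_i} \diamond z_{l_j}$ appearing in the last term of \eqref{eq:qshdef}. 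The only difference from the $\zeta_N$ case is that the summation range is $-N < n_i, m_j < 0$ rather than $0 < n_i, m_j < N$, which is irrelevant to the combinatorics of the partition.

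I do not anticipate any genuine obstacle: the $C$-part is a three-line case analysis and the $\zeta^-_N$-part is a standard bookkeeping exercise for iterated sums over totally ordered finite sets, insensitive to reflecting the summation range across the origin. The only mild subtlety is confirming, in the $C$-case, that whenever $C(u;x) C(v;x)$ vanishes for depth reasons, the corresponding vanishing of $C(u \ast v; x)$ is recovered from the uniform depth lower bound $\max(r,s)$ on the stuffle expansion.
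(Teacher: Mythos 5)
Your proof is correct. The $C$-part is exactly the paper's argument (the paper simply calls it ``obvious'' from $C(k_1;x)C(k_2;x)=C(k_1+k_2;x)$ and the vanishing in depth $\geq 2$); your depth bookkeeping via the bound $\operatorname{depth}\geq\max(r,s)$ on all terms of $u\ast v$ is the right way to make that precise. For $\zeta^-_N$ you take a slightly different route: you rerun the quasi-shuffle decomposition of the product of two nested sums directly over the index set $\{-N+1,\dots,-1\}$, whereas the paper instead uses the reflection identity $\zeta^{-}_N(k_1,\dots,k_r;x)=(-1)^{k_1+\dots+k_r}\zeta_N(k_r,\dots,k_1;-x)$ and reduces to Lemma~\ref{lem:zetanalghom} via the observation that word reversal $z_{k_1}\cdots z_{k_r}\mapsto z_{k_r}\cdots z_{k_1}$ and the sign twist $z_{k_1}\cdots z_{k_r}\mapsto(-1)^{k_1+\dots+k_r}z_{k_1}\cdots z_{k_r}$ are $\ast$-homomorphisms of $\h^1_\ast$. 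Both arguments rest on the same underlying fact; yours is more self-contained and makes explicit that the combinatorics only uses a totally ordered finite index set with the first letter attached to the maximal index, while the paper's is shorter and recycles an already-proved lemma together with two standard symmetries of the stuffle algebra. No gaps either way.
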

\begin{proof} For the map $C$ this is obvious, since $C(k_1;x) C(k_2;x) = C(k_1+k_2;x)$ and the higher depth products are trivial. For $\zeta^{-}_N$ observe that $\zeta^{-}_N(k_1,\dots,k_r;x) = (-1)^{k_1+\dots+k_r} \zeta_N(k_r,\dots,k_1;-x)$. The statement now follows from the well-known fact that the $\Q$-linear maps defined on the generators by $z_{k_1}\cdots z_{k_r} \mapsto z_{k_r} \cdots z_{k_1}$ and $z_{k_1}\cdots z_{k_r} \mapsto (-1)^{k_1+\dots+k_r} z_{k_1}\cdots z_{k_r} $ are algebra homormorphism on $\h^1_\ast$ together with Lemma \ref{lem:zetanalghom}.
\end{proof}

\begin{proposition}
For $N\geq 1$ we have 
\begin{align}\label{eq:truncatedmultitangent}
    \Psi_N({-};x) = \zeta_N({-};x) \star C({-};x) \star \zeta^{-}_N({-};x)\,.
\end{align}
\end{proposition}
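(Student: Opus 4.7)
The plan is to evaluate the right-hand side by unwinding the iterated convolution product against the deconcatenation coproduct, and then show that the resulting expression is precisely the decomposition of the multitangent sum according to the sign pattern of the summation indices.

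First I would evaluate $(\zeta_N \star C \star \zeta^{-}_N)(w)$ for a word $w = z_{k_1} \cdots z_{k_r}$. The iterated coproduct is
\begin{align*}
(\operatorname{Id}\otimes \Delta)\Delta(w) = \sum_{0 \leq i \leq j \leq r} z_{k_1}\cdots z_{k_i} \otimes z_{k_{i+1}}\cdots z_{k_j} \otimes z_{k_{j+1}}\cdots z_{k_r}\,,
\end{align*}
so by the definition of $\star$ the right-hand side becomes a triple sum over such decompositions. Because $C$ vanishes on words of depth $\geq 2$, only the decompositions with $j - i \in \{0,1\}$ survive, yielding
\begin{align*}
(\zeta_N \star C \star \zeta^{-}_N)(w;x) &= \sum_{i=0}^r \zeta_N(z_{k_1}\cdots z_{k_i};x)\,\zeta^{-}_N(z_{k_{i+1}}\cdots z_{k_r};x) \\
&\quad + \sum_{j=1}^r \zeta_N(z_{k_1}\cdots z_{k_{j-1}};x)\,\frac{1}{x^{k_j}}\,\zeta^{-}_N(z_{k_{j+1}}\cdots z_{k_r};x)\,.
\end{align*}

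Next I would partition the defining sum of $\Psi_N(k_1,\dots,k_r;x)$ according to the locations of the $n_i$ relative to $0$. Since $N > n_1 > \cdots > n_r > -N$ is a strict chain, there is at most one index with $n_j = 0$. Splitting on this gives two families: either (a) no $n_j$ equals $0$, in which case there exists a unique $i \in \{0,\dots,r\}$ with $n_1 > \cdots > n_i > 0 > n_{i+1} > \cdots > n_r$, contributing $\zeta_N(k_1,\dots,k_i;x)\,\zeta^{-}_N(k_{i+1},\dots,k_r;x)$; or (b) some unique $n_j = 0$, in which case the inner positive chain $n_1 > \cdots > n_{j-1} > 0$ and the inner negative chain $0 > n_{j+1} > \cdots > n_r > -N$ decouple and the middle factor $1/(x+0)^{k_j} = 1/x^{k_j}$ appears, matching the second family above.

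Matching the two expressions term by term completes the proof. The only step that needs care is the combinatorial partition, in particular verifying the boundary cases $i = 0$, $i = r$, $j = 1$, and $j = r$, where $\zeta_N$ or $\zeta^{-}_N$ is evaluated on the empty word and equals $1$ by convention; these correspond correctly to the factors arising from the coproduct when one of the outer tensor factors is $\one$. I do not expect a genuine obstacle — the argument is a direct bookkeeping identity — but the cleanest presentation will be to write out the three-fold deconcatenation explicitly and then invoke the sign-pattern decomposition of the index set $\{(n_1,\dots,n_r) : N > n_1 > \dots > n_r > -N\}$.
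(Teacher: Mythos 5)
Your proof is correct and is exactly the computation the paper has in mind: its own proof is the one-line remark that the identity ``follows immediately from the definition,'' and your unwinding of the triple deconcatenation against the sign-pattern decomposition of the chain $N>n_1>\dots>n_r>-N$ (with at most one $n_j=0$ producing the $C$-factor $1/x^{k_j}$) is the intended bookkeeping, including the empty-word boundary cases.
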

\begin{proof} This follows immediately from the definition.
\end{proof}

The limit $N\rightarrow \infty$ of \eqref{eq:truncatedmultitangent} evaluated at a word which is not in $\hz$ does, in general, not exist. To overcome this problem, we use that the multiple Hurwitz zeta function can be regularized (c.f. \cite{Bo}, \cite{KXY}) to algebra homomorphism $\zeta^\ast( {-}; x): \h^1_\ast \rightarrow \mathcal{O}(\Ha)$, such that
\begin{enumerate}[(i)]
\item For $k_1 \geq 2$ we have $\zeta^\ast(k_1,\dots,k_r;x) = \lim_{N\rightarrow \infty} \zeta_N(k_1,\dots,k_r;x)$\,,
\item $\zeta^\ast(1;x) = \sum_{n>0} \left( \frac{1}{n+x} - \frac{1}{n} \right)$\,.
\end{enumerate}
This follows again from the regularization \eqref{eq:regf}, since for $w\in \h^0$ and $x \in \Ha$ the limit $\lim_{N\rightarrow \infty} \zeta_N(w;x)$ exists. This gives an algebra homomorphism $\zeta({-}; x): \h^0_\ast \rightarrow \mathcal{O}(\Ha)$ which, by \eqref{eq:regf}, can be lifted to an algebra homomorphism from $\h^1_\ast$ to $\mathcal{O}(\Ha)[T]$. By sending $T$ to $\sum_{n>0} \left( \frac{1}{n+x} - \frac{1}{n} \right)$ we then obtain the algebra homomorphism $\zeta^\ast( {-}; x): \h^1_\ast \rightarrow \mathcal{O}(\Ha)$. For $k_1,\dots,k_r\geq 1$, we then define $\zeta^{-}_N(k_1,\dots,k_r;x) = (-1)^{k_1+\dots+k_r} \zeta_N(k_r,\dots,k_1;-x)$, which also gives (with the same argument as in the proof of Proposition \ref{prop:czetanalghom}) an algebra homomorphism $\zeta^\ast( {-}; x): \h^1_\ast \rightarrow \mathcal{O}(\Ha)$. Using these two we give the following definition of stuffle regularized multitangent functions. 

\begin{definition} \label{def:regmultitangent} \begin{enumerate}[(i)]
\item 
We define the algebra homomorphism $\Psi^\ast: \h^1_\ast \rightarrow \mathcal{O}(\Ha)$ by 
\begin{align*}
    \Psi^\ast({-};\tau) = \zeta^\ast({-};\tau) \star C({-};\tau) \star \zeta^{-,\ast}({-};\tau)\,.
\end{align*}
\item  For $M\geq 1$ define  the algebra homomorphism $\hat{g}^\ast_M: \h^1_\ast \rightarrow \mathcal{O}(\Ha)$
\begin{align*}
\hat{g}^\ast_{M}({-};\tau) &= \cop_{m=1}^{M-1} \Psi^\ast({-}; m\tau) \\
&= \cop_{m=1}^{M-1} \Big( \zeta^\ast({-};m\tau) \star C({-};m\tau) \star \zeta^{-,\ast}({-};m\tau) \Big)\,.
\end{align*}
\end{enumerate}
\end{definition}

The fact that both maps $\Psi^\ast$ and $\hat{g}^\ast_{M}({-};\tau)$ are algebra homormorphism again follow directly by Lemma \ref{lem:convprod}, \ref{lem:zetanalghom} and Proposition \ref{prop:czetanalghom}. The stuffle regularized multitangent functions $\Psi^\ast(k_1,\dots,k_r;x)$ in Definition \ref{def:regmultitangent}  coincide with the "symmetrel extension of multitangent functions" constructed in \cite{Bo}. In \cite{Bo}, this is done by using the language of moulds and mould product instead of using the convolution product in the Hopf algebra $\h^1_\ast$.

\begin{proposition} For all $w \in \h^0$ the limit $\hat{g}^\ast(w;\tau) := \lim_{M \rightarrow \infty}	\hat{g}^\ast_{M}(w;\tau)$ exists. In particular, this gives a $\Q$-algebra homomorphism $\hat{g}^\ast(-;\tau): \h^0 \rightarrow \mathcal{O}(\Ha)$.
\end{proposition}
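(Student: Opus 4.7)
The plan is first to expand $\hat{g}^\ast_M(w;\tau)$ explicitly using the iterated convolution with the deconcatenation coproduct:
\begin{align*}
\hat{g}^\ast_M(w;\tau) = \sum_{\substack{w = w_1 \cdots w_j \\ j \geq 0,\ w_i \neq \emptyset}} \sum_{M > m_1 > \cdots > m_j > 0} \prod_{i=1}^{j} \Psi^\ast(w_i; m_i \tau),
\end{align*}
so that the existence of the limit reduces to absolute convergence of each inner sum as $M \to \infty$, followed by the finite outer sum over compositions of $w$.

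The next step will be to derive a $q$-expansion for each factor $\Psi^\ast(v;\tau)$. Since $\Psi^\ast$ is an algebra homomorphism and each generator $\Psi^\ast(z_k;\tau)$ is visibly $1$-periodic in $\tau$ (for $k \geq 2$ by the Lipschitz formula, and for $k = 1$ via the identity $\Psi^\ast(z_1;\tau) = \pi\cot(\pi\tau)$), one sees that $\Psi^\ast(v;\tau)$ is $1$-periodic and holomorphic on $\Ha$ for every $v \in \h^1$. It therefore admits an expansion $\Psi^\ast(v;\tau) = a_0(v) + \sum_{d \geq 1} a_d(v) q^d$ with polynomially bounded coefficients, which in particular yields the crude uniform bound $|\Psi^\ast(v;m\tau)| \leq K_v$ for $m \geq 1$. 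The crucial claim is that $a_0(v) = 0$ for every non-empty $v \in \h^0$, which upgrades this to $\Psi^\ast(v;m\tau) = O(|q|^m)$. For $v \in \hz$ this follows immediately from Theorem \ref{thm:reductionmonotangent}, since the reduction writes $\Psi(v;\tau)$ as a $\mz$-combination of monotangent functions $\Psi_l(\tau)$ with $l \geq 2$, all of which have vanishing constant term by \eqref{eq:defmonotangent}. For $v \in \h^0 \setminus \hz$, I would analyze the convolution expansion of $\Psi^\ast(v;\tau)$ term by term and argue that the logarithmic growth at $i\infty$ contributed by the regularized factors $\zeta^\ast,\zeta^{-,\ast}$ on non-admissible pieces is beaten by the polynomial decay $O(\tau^{1-k})$ forced by the admissible head letter $z_k$, $k \geq 2$, inherited from $v \in \h^0$.

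With this vanishing in hand, the convergence argument is short. In every composition $w = w_1 \cdots w_j$ of a non-empty $w \in \h^0$, the leading factor $w_1$ lies in $\h^0$, so $|\Psi^\ast(w_1;m_1\tau)| \leq C|q|^{m_1}$, while the remaining factors $\Psi^\ast(w_i;m_i\tau)$ for $i \geq 2$ are uniformly bounded. Since at most $\binom{m_1 - 1}{j - 1}$ tuples $(m_2,\dots,m_j)$ satisfy $0 < m_j < \cdots < m_1$, each inner sum is dominated by $C' \sum_{m_1 \geq 1} m_1^{j-1}|q|^{m_1} < \infty$, yielding absolute convergence and existence of the limit $\hat{g}^\ast(w;\tau)$. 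For the algebra-homomorphism property, each $\hat{g}^\ast_M : \h^1_\ast \to \mathcal{O}(\Ha)$ is a $\Q$-algebra homomorphism by Lemma \ref{lem:convprod} together with Lemma \ref{lem:zetanalghom} and Proposition \ref{prop:czetanalghom}; since $\h^0$ is closed under $\ast$, passing to the limit in $\hat{g}^\ast_M(u \ast v;\tau) = \hat{g}^\ast_M(u;\tau)\,\hat{g}^\ast_M(v;\tau)$ for $u,v \in \h^0$ yields the claim.

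The hard part will be the vanishing $a_0(v) = 0$ for $v \in \h^0 \setminus \hz$: individual summands in the convolution expansion of $\Psi^\ast(v;\tau)$ do exhibit genuine logarithmic blow-up at $i\infty$, and one must show these divergences cancel exactly after summation over all decompositions $v = u_1 u_2 u_3$. The cleanest route I foresee is by induction on the depth of $v$ using the stuffle relations, with the admissible case ($v \in \hz$) serving as the base step.
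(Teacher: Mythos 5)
Your argument follows essentially the same route as the paper: expand $\hat g^\ast_M$ via the deconcatenation coproduct, use $1$-periodicity to get a $q$-expansion of each factor $\Psi^\ast(w_i;m_i\tau)$, and observe that the vanishing of the constant term of $\Psi^\ast(w_1;\tau)$ for the admissible leading factor $w_1\in\h^0$ forces all contributions to the coefficient of $q^n$ to come from $m_1\le n$. The paper phrases this as stabilization of the $q$-coefficients rather than absolute convergence, and --- like you --- it defers the key claim (that $\Psi^\ast(v;\tau)$ has vanishing constant term unless $v$ is a power of $z_1$) to the forthcoming thesis \cite{T}, so your sketch of that step matches the paper's level of detail.
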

\begin{proof} First notice that $\Psi^\ast({-};\tau)$ is, by construction, $1$-periodic in $\tau$ and therefore also possesses an expansion in $q$. The results follow from the fact that one can check that $\Psi^\ast(1,\dots,1;\tau)$ is the only regularized multitangent function with a non-vanishing constant term in its $q$-expansion. From this, one can then show that for each $n\geq 1$, there exists some $M(n)$, such that the coefficient of $q^n$ in $\hat{g}^\ast_{M}(w;\tau)$ with $w\in \h^0$ is the same for all $M>M(n)$. The details for this will be worked out in \cite{T}. 
\end{proof}

Again we can use the regularization \ref{eq:regf} to define an algebra homomorphism $\hat{g}^\ast: \h^1_\ast \rightarrow \mathcal{O}(\Ha)$ which satisfies the following
\begin{enumerate}[(i)]
\item $\hat{g}^\ast(w;\tau) = \lim_{M \rightarrow \infty} 	\hat{g}_{M}(w;\tau)$ for $w \in \h^0$.
\item $\hat{g}^\ast(z_1;\tau) = (-2\pi i) \sum_{m,n\geq 1} q^{mn}$.
\end{enumerate}
Notice that $\hat{g}^\ast(z_1;\tau)$ coincide with the $\hat{g}(1;\tau)$ in \eqref{eq:defmonog} and that for $k_1,\dots,k_r\geq 2$ we have $\hat{g}^\ast(k_1,\dots,k_r;\tau) = \hat{g}(k_1,\dots,k_r;\tau)$. 

\begin{definition} Define the \emph{stuffle regularized multiple Eisenstein series} as the algebra homomorphism $\mathbb{G}^\ast: \h^1_\ast \rightarrow \mathcal{O}(\Ha)$ 
	\vspace{-0.3cm}
	\begin{align*}
		\mathbb{G}^\ast = \hat{g}^\ast \star \zeta^\ast\,.
	\end{align*}
\end{definition}

\begin{theorem}
We have $\mathbb{G}^\ast_{\mid \hz} = \mathbb{G}$.
\end{theorem}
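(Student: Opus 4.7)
The plan is to show that on words in $\hz$ every regularization introduced in Section~\ref{sec:stmes} collapses to its classical counterpart from Section~\ref{sec:mes}, so that $\mathbb{G}^\ast$ reduces to the classical decomposition \eqref{eq:classicalmesasmouldproduct}. First, I would unwind $\mathbb{G}^\ast = \hat{g}^\ast \star \zeta^\ast$ via the deconcatenation coproduct to obtain, for $w \in \hz$,
\[
\mathbb{G}^\ast(w;\tau) = \sum_{uv = w} \hat{g}^\ast(u;\tau)\,\zeta^\ast(v)\,.
\]
The key structural point is that $\hz$, being spanned by words in the letters $\{z_k : k \geq 2\}$, is closed under taking prefixes and suffixes, so $u,v \in \hz \subset \h^0$ in every summand. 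By property (i) of the stuffle regularization, $\zeta^\ast(v) = \zeta(v)$, and the theorem reduces to showing that $\hat{g}^\ast(u;\tau)$ agrees on $\hz$ with the classical object defined in \eqref{eq:gastclassical}.

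The main step is the identity $\Psi^\ast(u';x) = \Psi(u';x)$ for every $u' \in \hz$. I would prove it by expanding $\Psi^\ast = \zeta^\ast \star C \star \zeta^{-,\ast}$: since $C$ vanishes in depth $\geq 2$, the triple convolution is a finite sum in which every $\zeta^\ast$- or $\zeta^{-,\ast}$-factor is applied to a subword of $u'$, which is again in $\hz$. For such admissible subwords both regularizations coincide with the termwise limits $\lim_{N\to\infty}\zeta_N$ and $\lim_{N\to\infty}\zeta^{-}_N$, so passing to the limit $N \to \infty$ in \eqref{eq:truncatedmultitangent} is legitimate and yields $\Psi^\ast(u';x) = \lim_{N\to\infty}\Psi_N(u';x) = \Psi(u';x)$.

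Plugging this back, for $u \in \hz$ the truncated $\hat{g}^\ast_M$ becomes
\[
\hat{g}^\ast_M(u;\tau) = \sum_{\substack{u_1\cdots u_j = u\\u_i \neq \emptyset}} \sum_{M > m_1 > \dots > m_j > 0} \prod_{i=1}^j \Psi(u_i;m_i\tau)\,,
\]
which is precisely the truncation of the classical $\hat{g}^\ast$; letting $M \to \infty$ gives agreement with \eqref{eq:gastclassical}. Combining this with $\zeta^\ast(v) = \zeta(v)$ and the classical identity \eqref{eq:classicalmesasmouldproduct} concludes $\mathbb{G}^\ast(w;\tau) = \mathbb{G}(w;\tau)$ for $w \in \hz$. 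The only real obstacle is bookkeeping: one must check that no non-admissible subword ever enters the intermediate deconcatenation sums, so that no genuinely divergent piece needs to be regularized. This is ensured by $\hz$ being closed under deconcatenation together with $C$ being concentrated in depths $0$ and $1$, after which the argument is essentially mechanical.
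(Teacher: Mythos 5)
Your proof is correct and follows the same route as the paper, whose own proof is just a two-line sketch saying that the construction of $\mathbb{G}^\ast$ mirrors the Fourier-expansion computation of $\mathbb{G}$ and that on indices with all entries $\geq 2$ the regularized ingredients reduce to the classical ones. You have simply filled in the details the paper leaves implicit (closure of $\hz$ under deconcatenation, $\zeta^\ast=\zeta$ and $\Psi^\ast=\Psi$ on admissible subwords, and the reduction to \eqref{eq:classicalmesasmouldproduct}), all of which check out.
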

\begin{proof} The construction of $\mathbb{G}^\ast$ followed exactly the calculation of the Fourier expansion of $\mathbb{G}$. The statement follows by checking that for indices with entries $\geq 2$ the $\hat{g}^\ast$ are exactly the same as the $\hat{g}$ in Section \ref{sec:mes}.
\end{proof}

\subsection{Comparison to shuffle regularized multiple Eisenstein series}
In \cite{G}, Goncharov introduces the Hopf algebra of formal iterated integrals. The coproduct in this Hopf algebra has an explicit combinatorial description. In \cite{BT} it was shown that, after dividing out a certain ideal, one obtains a Hopf algebra which, as an algebra, is isomorphic to $\h^1_\shuffle$. Therefore we can equip $\h^1_\shuffle$ with the \emph{Goncharov coproduct}, which we denote by $\Delta_G$. By the explicit formulas for $\Delta_G$ we obtain, for example
\begin{align*}
    \Delta_G(z_3 z_2) &= z_3 z_2 \otimes  1 + 3 z_2 \otimes z_3 + 2 z_3 \otimes z_2 + 1 \otimes z_3 z_2 \,.
\end{align*}
Compare this to the Fourier expansion of $\mathbb{G}(3,2)$: 
{\small
    \begin{align*}
        \mathbb{G}(3,2 ; \tau) &=\zeta(3,2) + 3   \hat{g}(2) \zeta(3) + 2  \hat{g}(3) \zeta(2)  +  \hat{g}(3,2)\,.
\end{align*}}
In \cite{BT} it was shown that the coproduct can always be used to describe the Fourier expansion of multiple Eisenstein series. More precisely, setting $f \star_G g = m \circ (f \otimes g) \circ \Delta_G$ it was shown the following:
\begin{theorem}[\cite{BT}]
We have $\mathbb{G} = (\hat{g} \star_G \zeta)_{\mid \hz}$.
\end{theorem}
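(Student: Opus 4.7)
The plan is to compare both sides term by term, combining Theorem \ref{thm:mesfourier} with an explicit formula for the Goncharov coproduct. Since $(\hat{g}\star_G\zeta)(z_\kk;\tau)=m\circ(\hat{g}\otimes\zeta)\circ\Delta_G(z_\kk;\tau)$ and the maps $\hat{g}$ and $\zeta$ are fixed, the theorem reduces to showing that the expansion
\[
\Delta_G(z_\kk) = \sum_{j=0}^{r}\sum_{\vec{l}}\alpha^{\kk}_{\vec{l},j}\,z_{l_1}\cdots z_{l_j}\otimes z_{l_{j+1}}\cdots z_{l_r}
\]
is governed by precisely the integers $\alpha^{\kk}_{\vec{l},j}$ that appear in the Fourier expansion of $\mathbb{G}(\kk;\tau)$ (modulo the convention that $\hat{g}$ acts on the left tensor factor and $\zeta$ on the right). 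The depth-two consistency check is already visible in the expansions of $\Delta_G(z_3z_2)$ and $\mathbb{G}(3,2;\tau)$ displayed in the text.

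First I would recall Goncharov's combinatorial formula for $\Delta_G$ on $\h^1_\shuffle$, via the identification of $\h^1_\shuffle$ with the relevant quotient of Goncharov's Hopf algebra of formal iterated integrals established in \cite{BT}. In this description $\Delta_G(z_\kk)$ is a signed sum of simple tensors indexed by choices of ``cut points'' in the iterated integral representation of $z_\kk$, with coefficients produced by the same binomial identities that govern Bouillot's partial fraction decomposition (Theorem \ref{thm:reductionmonotangent}).

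Next I would rerun the derivation of Theorem \ref{thm:mesfourier}: decompose $\mathbb{G}(\kk;\tau)$ via \eqref{eq:classicalmesasmouldproduct} into $\hat{g}^\ast\cdot\zeta$ contributions, then apply Theorem \ref{thm:reductionmonotangent} to each $\hat{g}^\ast$ factor to rewrite it as a $\mz$-linear combination of $\hat{g}$'s. Collecting like terms and comparing with the expansion of $\Delta_G$ above then yields the required coefficient identification. The $\Psi(1;\tau)$-terms, which would otherwise cause a mismatch, vanish by Proposition \ref{prop:mzvantipoderelation}, mirroring the fact that $\Delta_G$ only produces admissible components on admissible input.

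The main obstacle is the global combinatorial matching: while transparent in depth two, aligning in arbitrary depth the nested binomial sums arising from iterated partial fractions with the coefficients produced by $\Delta_G$ requires careful bookkeeping of signs and multi-indices. This is the technical heart of \cite{BT}, where the argument is organized by first lifting both the $\hat{g}$'s and multiple zeta values to an intermediate Hopf algebra of formal iterated integrals and then deducing the statement from Goncharov's coproduct formula applied to that lift, making the Goncharov convolution structure of the Fourier expansion manifest rather than coincidental.
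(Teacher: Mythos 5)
Your outline matches the actual argument of \cite{BT} (note that the present paper states this theorem without proof, merely citing \cite{BT}, and the machinery you invoke is exactly what Section \ref{sec:mes} sets up): one expands $\mathbb{G}(\kk;\tau)$ via \eqref{eq:classicalmesasmouldproduct} into multitangent contributions, reduces multitangents to monotangents by Bouillot's partial-fraction theorem with the $\Psi(1;\tau)$-terms killed by the antipode relation of Proposition \ref{prop:mzvantipoderelation}, and identifies the resulting coefficients with those of the Goncharov coproduct. The one caveat is that the decisive step --- the all-depth identity between the nested binomial coefficients produced by iterated partial fractions and the coefficients of $\Delta_G$ --- is asserted rather than carried out in your sketch, but you correctly locate it as the technical heart of \cite{BT}, so the proposal is a faithful account of the intended proof rather than a gap in strategy.
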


\begin{proposition}{\cite{BT}}
There exists an algebra homomorphism $\hat{g}^\shuffle: \h^1_\shuffle \rightarrow \mathcal{O}(\Ha)$ with $\hat{g}^\shuffle_{\mid \hz} = \hat{g}$.
\end{proposition}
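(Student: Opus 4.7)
The plan is to construct $\hat{g}^\shuffle$ in direct analogy with the construction of $\hat{g}^\ast$ in the previous subsection, replacing the deconcatenation coproduct on $\h^1_\ast$ by the Goncharov coproduct $\Delta_G$ on $\h^1_\shuffle$ mentioned just above. Concretely, one seeks an expression of $\hat{g}^\shuffle$ as a $\star_G$-convolution of $\shuffle$-algebra homomorphisms built from suitable iterated Eichler-type integrals (or equivalently regularized multitangent functions). The natural candidates are iterated integrals from $\tau$ to $i\infty$ of the 1-forms associated to $\Psi(k;\tau)$ (equivalently, to Eisenstein series of weight $k$); by Chen's theorem, such iterated integrals automatically satisfy the shuffle product in their indices, yielding an algebra homomorphism $\h^0_\shuffle \to \mathcal{O}(\Ha)$.

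Extracting from each such iterated integral the purely $q$-series part, i.e.\ subtracting the contributions of the constant-term multiple zeta values of the Eisenstein series, produces a candidate homomorphism $\hat{g}^\shuffle_0 : \h^0_\shuffle \to \mathcal{O}(\Ha)$. A direct $q$-expansion computation, using the Lipschitz formula \eqref{eq:defmonotangent} and the definition \eqref{eq:defmonog}, then verifies $\hat{g}^\shuffle_0(\kk;\tau) = \hat{g}(\kk;\tau)$ for all $\kk \in \hz$.

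To extend to $\h^1_\shuffle$, we apply the regularization procedure \eqref{eq:regf}: lift $\hat{g}^\shuffle_0$ to an algebra homomorphism $\h^1_\shuffle \to \mathcal{O}(\Ha)[T]$ via the isomorphism $\reg_\shuffle$, and then specialize $T$ to the holomorphic function $-2\pi i \sum_{m,n\geq 1} q^{mn}$ (matching the stuffle value $\hat{g}^\ast(z_1;\tau)$) to obtain the desired $\hat{g}^\shuffle : \h^1_\shuffle \to \mathcal{O}(\Ha)$. The main obstacle is the careful regularization of the iterated Eichler integrals at the cusp $i\infty$: the relevant 1-forms are not all rapidly decreasing, and one must also accommodate indices with $z_1$-entries. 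This is handled via tangential base-point techniques, analogous to the regularization of iterated integrals used for shuffle regularized multiple zeta values, and worked out in detail in \cite{BT}.
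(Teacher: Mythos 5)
The paper offers no proof of this proposition at all—it is imported wholesale from \cite{BT}—so the only thing to assess is whether your construction would actually work, and unfortunately its central step fails. You propose to realize $\hat{g}$ as the cuspidal part of iterated Eichler integrals of $1$-forms attached to $\Psi(k;\tau)$, so that Chen's theorem hands you the shuffle product. But the functions $\hat{g}(k_1,\dots,k_r)=\sum_{m_1>\dots>m_r>0}\Psi(k_1;m_1\tau)\cdots\Psi(k_r;m_r\tau)$ (see \eqref{eq:ghatclassical}) are iterated \emph{sums} over the parameters $m_i$, not iterated integrals in $\tau$: already in depth one, applying $q\tfrac{d}{dq}$ to $\hat{g}(k)$ multiplies the coefficient of $q^{mn}$ by $mn$ rather than shifting $k$ to $k+1$, so the $\hat{g}$'s do not form an iterated-integral system for Eisenstein forms, and a genuine length-$r$ Eichler integral of Eisenstein series of weights $k_1,\dots,k_r$ has a $q$-expansion that does \emph{not} match $\hat{g}(k_1,\dots,k_r)$. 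Hence the ``direct $q$-expansion computation'' you invoke to verify $\hat{g}^\shuffle_0(\kk;\tau)=\hat{g}(\kk;\tau)$ on $\hz$ would not go through. What the iterated-sum structure gives you for free is the \emph{stuffle} product (this is exactly the harmonic-product statement from \cite[Construction 6.7]{B1} quoted in Section~\ref{sec:mes}); the whole point of the proposition is that the $\hat{g}$'s \emph{also} admit a compatible shuffle structure, and that is not automatic from any Chen-type argument.

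The mechanism actually used in \cite{BT} is different: the shuffle product on the $\hat{g}$'s is extracted from the double shuffle structure of the bi-brackets of \cite{B1} (equivalently, from the partition/swap relation for the generating series of the $g$'s), which yields an explicit formula for $\hat{g}^\shuffle$ as a $\Q[-2\pi i]$-linear combination of ordinary brackets; Goncharov's iterated integrals enter only \emph{formally}, to define the coproduct $\Delta_G$, not as actual Eichler integrals regularized at tangential base points. Your final step—lifting an algebra homomorphism on $\h^0_\shuffle$ along $\reg_\shuffle$ to $\h^1_\shuffle\rightarrow\mathcal{O}(\Ha)[T]$ and then specializing $T$—is sound in principle, but it presupposes exactly the input your Eichler-integral construction does not supply, namely a $\shuffle$-algebra homomorphism on $\h^0_\shuffle$ whose restriction to $\hz$ is $\hat{g}$.
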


\begin{definition}
Define the \emph{shuffle regularized multiple Eisenstein series} as the algebra homomorphism $\mathbb{G}^\shuffle: \h^1_\shuffle \rightarrow \mathcal{O}(\Ha)$ 
\begin{align*}
	\mathbb{G}^\shuffle = \hat{g}^\shuffle \star \zeta^\shuffle\,.
\end{align*}
\end{definition}

By the previously mentioned results we have
\begin{align*}
	\mathbb{G}^\shuffle_{\mid \hz}	= \mathbb{G} = \mathbb{G}^\ast_{\mid \hz}\,,
\end{align*}
which has the following result as a consequence. 

\begin{corollary}
The shuffle regularized multiple Eisenstein series satisfy the \emph{restricted double shuffle relations}, i.e. for $w,v \in \hz$
\begin{align*}
	\mathbb{G}^\sh( w \shuffle v - w \ast v) = 0\,.
\end{align*}
\end{corollary}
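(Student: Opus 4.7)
The corollary is presented as a direct \emph{consequence} of the displayed equality $\mathbb{G}^\shuffle_{\mid \hz} = \mathbb{G} = \mathbb{G}^\ast_{\mid \hz}$ just above the statement, so the plan is to package this equality together with the multiplicativity of the two regularizations with respect to their respective products. There is essentially no obstacle beyond some careful bookkeeping: the statement reduces to a short chain of equalities expressing the fact that $\mathbb{G}^\sh(w \sh v)$ and $\mathbb{G}^\sh(w \ast v)$ both compute the same number $\mathbb{G}(w)\,\mathbb{G}(v)$.

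First, for $w,v \in \hz$ I would evaluate $\mathbb{G}^\sh(w \sh v)$ using that $\mathbb{G}^\sh \colon \h^1_\sh \to \mathcal{O}(\Ha)$ is a $\sh$-algebra homomorphism, followed by the restriction identity on $\hz$:
\[
\mathbb{G}^\sh(w \sh v) \;=\; \mathbb{G}^\sh(w)\,\mathbb{G}^\sh(v) \;=\; \mathbb{G}(w)\,\mathbb{G}(v)\,.
\]
Next I would evaluate $\mathbb{G}^\sh(w \ast v)$. Here the additional ingredient is that $\hz$ is closed under the stuffle product (recorded in Section \ref{sec:algsetup}), so $w \ast v$ again lies in $\hz$. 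Therefore I can first pass from $\mathbb{G}^\sh$ to $\mathbb{G}$ via the restriction, then to $\mathbb{G}^\ast$ via the restriction in the other direction, and finally invoke the $\ast$-multiplicativity of $\mathbb{G}^\ast$:
\[
\mathbb{G}^\sh(w \ast v) \;=\; \mathbb{G}(w \ast v) \;=\; \mathbb{G}^\ast(w \ast v) \;=\; \mathbb{G}^\ast(w)\,\mathbb{G}^\ast(v) \;=\; \mathbb{G}(w)\,\mathbb{G}(v)\,.
\]
Subtracting the two displays gives $\mathbb{G}^\sh(w \sh v - w \ast v) = 0$, as claimed.

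The only subtle point to be mindful of is that one must \emph{not} try to write $\mathbb{G}^\sh(w \sh v) = \mathbb{G}(w \sh v)$ directly, because $\hz$ is not closed under the shuffle product (for instance $z_2 \sh z_3$ produces a term $z_4 z_1 \notin \hz$, cf.\ \eqref{eq:23shuffle}). Consequently the first computation is routed through shuffle-multiplicativity \emph{before} the restriction identity is applied, whereas the second computation uses the restriction identity first and only then invokes stuffle-multiplicativity. All the nontrivial analytic content has already been absorbed into the constructions of $\mathbb{G}^\sh$ and $\mathbb{G}^\ast$ and the theorems asserting their agreement with $\mathbb{G}$ on $\hz$; the corollary itself is pure algebra.
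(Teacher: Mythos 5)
Your proof is correct and follows exactly the route the paper intends: the corollary is stated as an immediate consequence of $\mathbb{G}^\shuffle_{\mid \hz} = \mathbb{G} = \mathbb{G}^\ast_{\mid \hz}$, combined with the $\shuffle$- and $\ast$-multiplicativity of $\mathbb{G}^\shuffle$ and $\mathbb{G}^\ast$ and the closure of $\hz$ under $\ast$. Your remark about not writing $\mathbb{G}^\sh(w \shuffle v) = \mathbb{G}(w \shuffle v)$ directly, since $\hz$ is not closed under $\shuffle$, is exactly the right bookkeeping point.
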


As it was already observed numerically at the end of \cite{BT}, there seem to be more relations satisfied by the $\mathbb{G}^\sh$ than just the restricted double shuffle relations. By comparing $\mathbb{G}^\ast$ and $\mathbb{G}^\sh$ explicitly in depth three, one can obtain the following result, which will be one of the main results of \cite{T}.

\begin{proposition}[\cite{T}]For $k_1, k_3\geq 2, k_2 \geq 1 $ the shuffle regularized multiple Eisenstein series satisfy the finite double shuffle relations
\begin{equation*}
    \mathbb{G}^\sh(z_{k_1}z_{k_2}\sh z_{k_3}-z_{k_1}z_{k_2}\ast z_{k_3})=0 \, .
\end{equation*}
\end{proposition}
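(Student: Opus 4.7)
The plan is to distinguish by whether the inner entry $k_2$ is at least $2$ or equal to $1$. If $k_2\ge 2$, then both $z_{k_1}z_{k_2}$ and $z_{k_3}$ lie in $\hz$, and the identity is an instance of the restricted double shuffle relations for $\mathbb{G}^\sh$ proven in the preceding corollary. Hence only the case $k_2=1$ requires a new argument.

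Assume $k_2=1$. Since $\mathbb{G}^\sh$ is $\sh$-multiplicative, $\mathbb{G}^\ast$ is $\ast$-multiplicative, and $\mathbb{G}^\sh(z_{k_3})=\mathbb{G}^\ast(z_{k_3})=\mathbb{G}(z_{k_3})$ because $z_{k_3}\in\hz$, the desired vanishing is equivalent to
\begin{equation*}
(\mathbb{G}^\sh - \mathbb{G}^\ast)(z_{k_1}z_1\ast z_{k_3}) = (\mathbb{G}^\sh - \mathbb{G}^\ast)(z_{k_1}z_1)\cdot \mathbb{G}(z_{k_3}).
\end{equation*}
Expanding the stuffle gives
\begin{equation*}
z_{k_1}z_1\ast z_{k_3} = z_{k_1}z_1 z_{k_3} + z_{k_1}z_{k_3}z_1 + z_{k_1}z_{1+k_3} + z_{k_3}z_{k_1}z_1 + z_{k_1+k_3}z_1,
\end{equation*}
and only $z_{k_1}z_{1+k_3}$ lies in $\hz$, where $\mathbb{G}^\sh-\mathbb{G}^\ast$ vanishes and contributes $0$. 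The problem therefore reduces to computing $\mathbb{G}^\sh-\mathbb{G}^\ast$ on the depth-two word $z_{k_1}z_1$, on three admissible depth-three words, and on the depth-two word $z_{k_1+k_3}z_1$, and then verifying the displayed equation.

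To execute that comparison I would use the decomposition $\mathbb{G}^\bullet = \hat{g}^\bullet \star \zeta^\bullet$ for $\bullet\in\{\sh,\ast\}$, with $\star$ taken with respect to the appropriate coproduct on $\h^1_\bullet$ (Goncharov for $\sh$ and deconcatenation for $\ast$), combined with the IKZ identity $\zeta^\sh=\rho\circ\zeta^\ast$ from \eqref{eq:defrho}. The difference $\mathbb{G}^\sh-\mathbb{G}^\ast$ on an admissible word outside $\hz$ then splits into a $\zeta$-part controlled by $\rho$ and a $\hat{g}$-part controlled by the analogous comparison of Fourier expansions, and substituting these depth-two and depth-three formulas turns the above equation into a finite algebraic check. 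The main obstacle is producing a usable formula for $\hat{g}^\sh-\hat{g}^\ast$ on the three admissible depth-three words above: this forces one to unfold the Goncharov coproduct $\Delta_G$ against the deconcatenation coproduct $\Delta$ on words containing a non-admissible subword, and it is precisely this detailed bookkeeping that is carried out in the forthcoming thesis \cite{T}. Once those formulas are available, the displayed identity, and hence the proposition, follows by direct substitution.
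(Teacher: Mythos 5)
The paper itself contains no proof of this proposition: it is stated as a result of the forthcoming thesis \cite{T}, with only the one-line indication that it follows ``by comparing $\mathbb{G}^\ast$ and $\mathbb{G}^\sh$ explicitly in depth three.'' Your reduction is correct and is consistent with that strategy. The case $k_2\geq 2$ is indeed an instance of the restricted double shuffle relations, since then $z_{k_1}z_{k_2}, z_{k_3}\in\hz$. For $k_2=1$, your equivalence
$(\mathbb{G}^\sh-\mathbb{G}^\ast)(z_{k_1}z_1\ast z_{k_3})=(\mathbb{G}^\sh-\mathbb{G}^\ast)(z_{k_1}z_1)\,\mathbb{G}(z_{k_3})$
is a valid consequence of the multiplicativity of $\mathbb{G}^\sh$ for $\sh$, of $\mathbb{G}^\ast$ for $\ast$, and of $\mathbb{G}^\sh(z_{k_3})=\mathbb{G}^\ast(z_{k_3})=\mathbb{G}(z_{k_3})$; your stuffle expansion is also correct, and $z_{k_1}z_{1+k_3}$ is indeed the only term lying in $\hz$.

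However, as a proof the proposal is incomplete, and the gap is exactly the part you defer: you never produce or verify the values of $\mathbb{G}^\sh-\mathbb{G}^\ast$ on the five remaining words $z_{k_1}z_1$, $z_{k_1+k_3}z_1$, $z_{k_1}z_1z_{k_3}$, $z_{k_1}z_{k_3}z_1$, $z_{k_3}z_{k_1}z_1$, nor check that the displayed identity holds after substitution. This is not a routine appeal to $\zeta^\sh=\rho\circ\zeta^\ast$: the two regularized Eisenstein series are built from different data ($\hat{g}^\sh$ via the Goncharov coproduct $\Delta_G$ from \cite{BT}, versus $\hat{g}^\ast$ via the convolution construction with the deconcatenation coproduct and the regularized multitangent functions), so the difference on words containing $z_1$ has no formula available anywhere in this paper. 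Since the paper asserts the result only on the authority of \cite{T}, there is nothing here against which the final computational step could be checked; your argument establishes a correct and useful reduction of the proposition to that depth-$\leq 3$ comparison, but does not itself prove it.
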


But in higher depths, it seems that not all finite double shuffle relations are satisfied by the $\mathbb{G}^\sh$, and there is no explicit conjecture yet which relations are satisfied. For this, it might be necessary to understand the differences between $\mathbb{G}^\sh$ and $\mathbb{G}^\ast$ in more detail. In particular, it might be interesting to check if an analogue of the map $\rho$ in \eqref{eq:defrho} exists for the regularizations of multiple Eisenstein series.

\end{document}